\documentclass[letter,11pt, reqno]{amsart}
\usepackage[margin=3cm]{geometry}


\usepackage{multicol}

\usepackage{adjustbox}
\usepackage{graphicx}%
\usepackage{multirow}%
\usepackage{amsmath,amssymb,amsfonts}%
\usepackage{amsthm}%
\usepackage{mathrsfs}%
\usepackage{xcolor}
\usepackage{booktabs}%
\usepackage{natbib}
\usepackage{hyperref}
\usepackage{caption}
\usepackage{amssymb}
\usepackage{amsfonts}
\usepackage{bm}
\usepackage{enumerate}
\usepackage{enumitem}
\newcommand{\R}{\mathbb{R}}

\newcommand{\N}{\mathbb{N}}
\newcommand{\B}{\mathbb{B}}
\newcommand{\E}{\mathbb{E}}

\newcommand{\D}{\mathrm{D}}

\newcommand{\BB}{\mathcal{B}}
\newcommand{\U}{\mathcal{U}}
\newcommand{\HH}{\mathcal{H}}
\renewcommand{\SS}{\mathbb{S}}
\renewcommand{\O}{\mathcal{O}}

\newcommand{\dsum}{\displaystyle\sum}

\newcommand{\dsup}{\displaystyle\sup}

\newcommand{\conv}{{\rm conv}}

\renewcommand{\;}{\quad}

\theoremstyle{thmstyleone}%
\newtheorem{theorem}{Theorem}

\newtheorem{proposition}[theorem]{Proposition}%
\newtheorem{ex}[theorem]{Example}%
\newtheorem{lemma}[theorem]{Lemma}%
\newtheorem{corollary}[theorem]{Corollary}%

\theoremstyle{thmstyletwo}%
\newtheorem{remark}{Remark}%

\theoremstyle{thmstylethree}%
\newtheorem{definition}{Definition}%


\definecolor{punkpink}{RGB}{255, 20, 147}

\definecolor{vbcolor}{rgb}{1.0, 0.6, 0.4}

\usepackage{amsmath,amssymb,amsthm,mathtools}
\usepackage[ruled,vlined]{algorithm2e}
\usepackage{bm}

\usepackage{tabularx}
\usepackage{booktabs} 
\usepackage{multirow}
\usepackage{booktabs}
\let\origmaketitle\maketitle
\def\maketitle{
	\begingroup
	\def\uppercasenonmath##1{} 
	\let\MakeUppercase\relax 
	\origmaketitle
	\endgroup
}

\begin{document}

\title[]{\Large Structural and Solution Analysis for the Ordered Weber Problem under Spatial Uncertainty}

\author[V. Blanco, D. Laborda \MakeLowercase{and} M. Mart\'inez-Ant\'on]{
{\large V\'ictor Blanco$^{\dagger}$ and  Miguel Mart\'inez-Ant\'on$^{\dagger}$}\medskip\\
$^\dagger$Institute of Mathematics (IMAG), Universidad de Granada\\
\texttt{vblanco@ugr.es}, \texttt{mmanton@ugr.es}
}

\maketitle

\begin{abstract}
    We propose a general analytical framework for single-facility continuous location problems under spatial demand uncertainty. In contrast to classical formulations based on discrete or regionally aggregated demands, the proposed model represents uncertainty through general probability measures on $\R^d$, thereby encompassing finite, bounded, and unbounded support distributions within a unified formulation. The objective aggregates expected distances by means of an ordered weighted averaging operator, providing a flexible mathematical structure that includes the classical Weber problem and its ordered extensions as special cases. We establish fundamental properties of this stochastic ordered Weber model, including convexity, continuity, and existence of optimal solutions, and we derive quantitative bounds on the proximity between stochastic minimizers and the convex hulls of demand supports. Building upon these results, we develop and analyze an adaptive sample average approximation scheme, proving its convergence and deriving finite-sample error estimates under mild regularity conditions. For spherically symmetric distributions, we further obtain explicit analytical expressions for the approximation error. Together, these results provide a rigorous mathematical foundation for a broad class of stochastic ordered location models and highlight new theoretical connections between convex analysis, stochastic programming, and ordered optimization.
\end{abstract}

\keywords{Weber problem, ordered optimization, spatial uncertainty, random demand, sample average approximation, spherically symmetric distribution}

\subjclass[2020]{90B85, 90C25, 90C15, 46N10, 90C59, 60D05}

\section{Introduction} \label{sec:intro}
The Weber problem is a foundational model in continuous location theory and mathematical optimization. It seeks a point in Euclidean space that minimizes the sum of weighted distances to a finite set of demand points~\citep{Weber1909,Love1988,Brimberg2003,chandrasekaran1990algebraic}. From an optimization perspective, the Weber problem represents a prototypical example of a convex yet non-differentiable minimization problem, as its objective function is convex but exhibits singularities on the demand points. This structure has made the model a benchmark for the development and analysis of algorithms in non-smooth optimization, most notably Weiszfeld’s classical iterative method and its numerous extensions~\citep{Weiszfeld1937,Brimberg1995}. Moreover, the Weber formulation serves as a continuous analogue and theoretical precursor to several discrete and network-based models, such as the $p$-median and $p$-center problems, thereby bridging geometric optimization, facility location, and data-driven modeling~\citep{DreznerHamacher2002,Small1990,Cardinal2009}. 

Over the last decades, a wide range of strategies have been developed to solve the Weber problem efficiently. While early studies mainly focused on Euclidean distances in the plane~\citep[see, e.g.,][]{Love1988,Brimberg2003}, leading naturally to geometric and iterative approaches, more recent works have exploited modern convex optimization tools. In particular, formulations based on $p$-order cone programming enable an efficient treatment of extended Weber-type problems in higher dimensions, with general $\ell_p$-norm distances, ordered objective functions~\citep{espejo2009convex,blanco2014revisiting}, or even geodesic distances~\citep{blanco2017continuous}.

In many real-world and theoretical settings, the exact locations of demand points are not known with certainty. This gives rise to models that incorporate spatial uncertainty or other stochastic components, reflecting imprecise data aggregation, temporal variability in demand, or inherent randomness in spatial distributions. To address such cases, several generalizations of the Weber problem have been proposed in which each demand is represented by a region, a probability distribution, or an uncertain set~\citep{carrizosa1995generalized,carrizosa1998weber,Krarup1978,DreznerWesolowsky1991,BrimbergDrezner2011,kalczynski2025weber}. These extensions, often referred to as \emph{Weber problems with regional demands} or \emph{stochastic Weber problems}, give rise to rich classes of convex and robust formulations. They have deepened the theoretical understanding of location models under uncertainty and inspired new algorithmic developments for minimizing expected or worst-case distance costs, reinforcing the Weber problem’s central role as a bridge between geometry, convex analysis, and optimization.

Despite these advances, the existing literature remains limited in generality. On the one hand, some recent studies~\citep[e.g.,][]{kalczynski2025weber} assume that the probability distributions representing regional demands are identical, implicitly imposing homogeneous stochastic behavior across all demands. On the other hand, most stochastic formulations in the literature describe uncertainty through compact regions, typically endowed with spatial density functions defining the likelihood of each demand location within its region. Yet, in many applications, the uncertainty in the location of demand points is neither bounded nor homogeneous, which motivates the need for a more general framework capable of handling unbounded, heterogeneous, or distributional uncertainty in a unified way.

Table~\ref{tab:review} summarizes the main contributions to the study of Weber-type problems with spatially uncertain demand, illustrating the progressive generalization of the classical point-based model. Early works, such as \citet{love1972x}, initiated the analysis of continuous demand by considering uniformly distributed rectangular regions under the Euclidean distance. Subsequent contributions by \citet{carrizosa1995generalized, carrizosa1998weber, carrizosa1998location} established a probabilistic and geometric framework allowing both demand and facilities to occupy extended regions and generalized the model to higher dimensions and arbitrary gauge-based distances. Later studies focused on specific geometries and computational strategies: \citet{Fekete2005} addressed polygonal domains and geodesic paths under $\ell_1$ norms, \citet{valero2008single} proposed Weiszfeld-like algorithms for general $\ell_p$ norms with $p \in [1,2]$, whereas \citet{kalczynski2025weber} and \citet{ByrneKalcsics2022} analyzed disc-shaped and barrier-constrained regions, respectively, under the Euclidean metric. \citet{Yao2014} introduced a GIS-based discretization approach to approximate continuous regional demand. Finally, \citet{puerto2011structure} provided an efficient algorithm to characterize the entire set of optimal solutions for the case of total polyhedrality (in terms of both metrics and demand regions) in the plane. Additionally, they developed a discretization result that yields $\epsilon$-approximate solutions for the more general case.
\begin{table}[ht]
\centering
\renewcommand{\arraystretch}{1.1}
\setlength{\tabcolsep}{4pt}
\begin{adjustbox}{width=\textwidth,center}
\begin{tabular}{c c p{1.4cm} p{1.7cm} p{9cm} l}
\toprule
\textbf{Shape} & \textbf{Dim.} & \textbf{Demand} & \textbf{Metric} & \textbf{Contribution} & \textbf{Reference} \\
\midrule
R & 2D & Uniform & $\ell_2$ & First analytical treatment of the continuous Weber problem with \linebreak uniformly distributed regional demand. & \cite{love1972x} \\
G & 2D & General & Gauge & Introduces a unified probabilistic framework where both demand and facility are extended regions. & \cite{carrizosa1995generalized} \\
G & 2D & General & Gauge & Defines the regional Weber problem and proposes approximation methods for expected distances. & \cite{carrizosa1998weber} \\
G & $n$D & General & Gauge & Jointly optimizes facility location and shape, extending the Weber model to regional facilities. & \cite{carrizosa1998location} \\
P & 2D & Uniform & $\ell_1$ & Provides exact algorithms for continuous demand over polygonal domains with obstacles. & \cite{Fekete2005} \\
H & $n$D & Uniform & $\ell_p$ & Weiszfeld-like iterative algorithm with convergence guarantees for \linebreak $\ell_p$-norm based distances with $p \in [1,2]$. & \cite{valero2008single} \\
P & 2D & General & Polyhedral & Provides an efficient algorithm to characterize the entire optimal \linebreak solution set. & \cite{puerto2011structure} \\
G & 2D & General & $\ell_2$ & GIS-based discretization of the problem. & \cite{Yao2014} \\
G & 2D & Uniform & $\ell_2$ & Models continuous demand in regions with forbidden zones or \linebreak barriers. & \cite{ByrneKalcsics2022} \\
D & 2D & Uniform & $\ell_2$ & Models spatially extended demand via discs and derives geometric properties and algorithms. & \cite{kalczynski2025weber} \\
\bottomrule
\end{tabular}
\end{adjustbox}
\caption{Representative contributions to the Weber problem with spatially uncertain demand. The shape of the demand regions is denoted as: D (discs), R (rectangles), G (general), P (polyhedra), and H (hypercubes).}
\label{tab:review}
\end{table}

The goal of this paper is to propose a general framework for Weber-type problems that does not rely on predefined demand regions, but instead models \emph{spatial uncertainty} through general probability measures. Such a formulation can easily be particularized to regional demands by bounding the probability support. The model accommodates arbitrary distance functions on $\mathbb{R}^d$ and introduces a flexible family of objective functions that aggregate the weighted expected distances from the demands to the facility, based on an \emph{ordered weighted} operator. These operators have been recognized as alternative aggregation mechanisms that can better capture realistic or multi-criteria preferences, as well as versatile schemes that generalize numerous classical measures by reordering costs according to their rank and weighting them by their ordered positions~\citep{Yager1988}. By adjusting the weight vector, ordered weighted functions can reproduce the minimum, maximum, median, quantile, or arithmetic mean, making them a powerful modeling tool for fairness-oriented models. In location science, ordered operators underpin the family of \emph{ordered median location problems}, which unify diverse cost-based objectives under a single formulation~\citep[see, e.g.,][]{g:puerto2000geometrical,nickel2005location,blanco2014revisiting,blanco2016continuous,Labbe2017,Marin2020,blanco2023fairness,Ljubic2024,espejo2009convex}.

We begin by establishing several theoretical properties of the proposed model, with particular attention to the structural behavior of its optimal solutions and their proximity to the convex hull of compact regions that concentrate most of the demand distribution. Building upon this analysis, we develop, for the first time in continuous location, an adaptive \emph{sample average approximation} (SAA) scheme specifically designed for the stochastic ordered Weber framework. We prove its convergence under mild regularity conditions, thereby providing a solid theoretical foundation for its use in stochastic continuous location. Moreover, for the case of spherically symmetric demand distributions, we derive analytical bounds that quantify the approximation error when the stochastic problem is replaced by its deterministic surrogate, obtained by substituting each random demand with its symmetry center. For certain distributional families, these bounds admit closed-form expressions, revealing the dependence of the approximation quality on geometric and probabilistic parameters. Finally, the proposed framework is complemented by computational experiments on benchmark instances, which illustrate the accuracy and computational efficiency of the method.

\paragraph{\bf Main Contributions}

The main contribution of this work is theoretical, supported by a computational validation on benchmark instances. The key advances can be summarized as follows:
\begin{enumerate}
\item We introduce the \emph{ordered Weber problem under spatial uncertainty}, in which demand is represented through arbitrary probability measures, and the distances are aggregated with ordered operators. This model unifies a broad family of continuous location models to uncertain spatial settings.
    \item We establish the \emph{convex-analytic structure} of the ordered Weber problem under spatial uncertainty, providing new results on continuity, convexity, coercivity, and compactness of the objective function and feasible set.
    \item We derive \emph{quantitative proximity bounds} between the stochastic minimizer and the convex hull of compact regions that concentrate most of the demand probability mass, thus revealing the geometric behavior of optimal solutions.
    \item We develop and analyze an \emph{adaptive sample average approximation scheme}, proving its convergence under mild regularity assumptions and characterizing its stability through finite-sample surrogates.
    \item For \emph{spherically symmetric distribution families}, we derive explicit analytical error bounds that yield closed-form characterizations of the deterministic approximation error arising when the continuous demand is replaced by the symmetry centers of the distributions.
    \item Finally, we validate the theoretical findings through \emph{computational experiments on benchmark instances}, which confirm the accuracy, efficiency, and practical interpretability of the proposed framework.
\end{enumerate}
\paragraph{\bf Organization}
The remainder of this paper is organized as follows. Section~\ref{sec:problem} introduces the general formulation of the ordered Weber problem with spatially uncertain demand, including its probabilistic setting, distance functions, an ordered-based objective. Section~\ref{sec:saa} presents the adaptive sample average approximation algorithm and establishes its convergence guarantees. Section~\ref{sec:sym} focuses on spherically symmetric demand distributions, deriving analytical error bounds for their deterministic approximations. Section~\ref{sec:comp} reports the results of our numerical validation experiments, illustrating the performance and robustness of the proposed approach. Finally, Section~\ref{sec:conc} concludes the paper and outlines future research directions.
\section{Preliminaries}  \label{sec:problem}
In this section, we introduce the notation used throughout the paper and formally state the ordered Weber problem with spatially uncertain demands.

Given $n\in \N$, we denote by $[n]:=\{1,\ldots,n\}$ the index set with cardinality $n$. For every $i \in [n]$, let $(\Omega_i, \mu_i)$ be a probability space in $(\R^d, \BB)$, where $\BB$ stands for the Borel $\sigma$-algebra over the Euclidean space $\R^d$. Let $X_i\in \Omega_i$ be a random vector drawn according to $\mu_i$ for every $i\in [n]$. Each of these random vectors represents a random demand in $\R^d$, and each of these demands, $X_i$, is endowed with a nonnegative weight $\omega_i\in \R_+$, for all $i \in [n]$, that represents the importance, preference, or population of the $i$th demand.

We are given a distance in $\R^d$, $\D: \R^d \times \R^d \rightarrow \R_+$. The goal of this paper is to find a point in $\R^d$ minimizing some loss function of the \emph{expected} distances between this point and the random demands $X_1, \ldots, X_n$. Note that the expression for each of the expected values is as follows:
\begin{equation*}
    \mathbb{E}\left[\D(y,X_i)\right] = \int_{x \in \Omega_i} \D(y,x) d\mu_i = \int_{\Omega_i} \D(y,x) f_i(x) dx, \quad \forall i \in [n],
\end{equation*}
where the last equality holds only in the case that there is an explicit density function $f_i: \Omega_i \to \R$ of each probability distribution $X_i\sim \mu_i$. Hereinafter, to improve readability, when computing expectations, one may integrate over the entire domain $\R^d$, under the assumption that the measure of the complement of $\Omega_i$ is zero, i.e., $\mu_i(\R^d \setminus \Omega_i) = 0$, and the density function $f_i$ vanishes there, i.e., $f_i(\R^d\setminus \Omega_i)=\{0\}$ for all $i\in [n]$.

Each of the expected values is aggregated by means of an ordered weighted sum that we define as follows.
\begin{definition}[Ordered Weighted Sum]
    Let $\bm{\lambda} = (\lambda_1, \ldots, \lambda_n)\in \R^n$ be a weight vector and $\bm{d}=(d_1,\ldots, d_n)\in \R^n$ be a cost vector. The \emph{ordered weighted sum} is defined as:
\begin{equation*}
    \O_{\bm{\lambda}}(\bm{d}):=\dsum_{i=1}^n\lambda_i d_{(i)},
\end{equation*}
where $(\cdot)\in \mathscr{S}_n$ is any element of the symmetric group on $n$ letters sorting the cost vector $\bm{d}$ in non-increasing order, i.e., $d_{(i)}\geq d_{(i+1)}$ for all $i\in [n-1]$.
\end{definition}

The goal of the \emph{ordered Weber problem under spatial uncertainty} is to find the placement of a center, $y \in \R^d$, that minimizes the ordered weighted sum of the expected distances from $y$ to each random vector $X_i$, i.e.,
\begin{equation}\label{eq:owp}\tag{\rm OWP-SU}
    \min_{y \in \R^d} \O_{\bm{\lambda}}\left(\omega_{1}\mathbb{E}\left[\D\left(y,X_{1}\right)\right],\ldots, \omega_{n}\mathbb{E}\left[\D\left(y,X_{n}\right)\right]\right).
\end{equation}
Let $\rho(y) := \O_{\bm{\lambda}}\left(\omega_{1}\mathbb{E}\left[\D\left(y,X_{1}\right)\right],\ldots, \omega_{n}\mathbb{E}\left[\D\left(y,X_{n}\right)\right]\right) = \sum_{i=1}^n \lambda_i \omega_{(i)} \E\left[\D(y,X_{(i)})\right]$  denote the stochastic ordered Weber objective. For simplicity, we omit the explicit dependence on ${\bm \omega}$ and ${\bm \lambda}$, which are treated as fixed parameters of the problem.

We begin by analyzing the convex-analytic properties of this function.
\begin{proposition}
If $\D(\cdot,X)$ is convex and coercive for every $X$ and  $\lambda_1 \geq \cdots \geq \lambda_n \ge 0$, then $\rho$ is convex, coercive, and locally Lipschitz on $\R^d$.
\end{proposition}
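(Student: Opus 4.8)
The plan is to decompose the statement into two layers: the individual expected-distance functions $g_i(y) := \omega_i\,\E[\D(y,X_i)]$, and the way the operator $\O_{\bm{\lambda}}$ transmits convexity and monotonicity under the hypothesis $\lambda_1 \geq \cdots \geq \lambda_n \geq 0$. Writing $\rho = \O_{\bm{\lambda}}(g_1,\ldots,g_n)$, convexity and local Lipschitzness will follow from a convex-composition argument, while coercivity will be obtained by a separate measure-theoretic step and a lower bound that strips away all but the leading sorted term.

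First I would establish the properties of each $g_i$. For every fixed $x$ the map $y \mapsto \D(y,x)$ is convex by hypothesis; applying the convexity inequality pointwise and integrating against the nonnegative measure $\mu_i$ preserves it, so $y \mapsto \E[\D(y,X_i)]$ is convex, and multiplication by $\omega_i \geq 0$ keeps it so. Assuming the expectations are finite, so that $g_i \colon \R^d \to \R$ is real-valued (I would either take this as part of the standing integrability setup or restrict to the effective domain), the standard fact that a finite convex function on $\R^d$ is locally Lipschitz gives local Lipschitzness of each $g_i$.

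Next I would analyze the operator. By the rearrangement inequality, under $\lambda_1 \geq \cdots \geq \lambda_n \geq 0$ the non-increasing assignment is optimal, hence $\O_{\bm{\lambda}}(\bm{d}) = \max_{\sigma \in \mathscr{S}_n}\sum_{i=1}^n \lambda_i d_{\sigma(i)}$, a pointwise maximum of finitely many linear forms whose coefficients are permutations of $(\lambda_1,\ldots,\lambda_n)$, all nonnegative. Thus $\O_{\bm{\lambda}}$ is convex and nondecreasing in each coordinate. The composition rule, namely that a convex and coordinatewise-nondecreasing outer function composed with convex inner functions is convex, then yields convexity of $\rho$; and since $\O_{\bm{\lambda}}$ is globally Lipschitz (its affine pieces have bounded coefficients) while the $g_i$ are locally Lipschitz, the composition $\rho$ is locally Lipschitz.

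Finally, coercivity. For a single $g_i$ with $\omega_i > 0$, take any sequence with $\|y_k\| \to \infty$: coercivity of $\D(\cdot,x)$ gives $\D(y_k,x) \to \infty$ pointwise in $x$, so Fatou's lemma (valid since $\D \geq 0$) forces $\liminf_k \E[\D(y_k,X_i)] = \infty$, i.e. $g_i$ is coercive. To lift this to $\rho$, I would use that the largest sorted entry equals the maximum, $g_{(1)}(y) = \max_i g_i(y)$, and that every summand is nonnegative, so $\rho(y) \geq \lambda_1 \max_i g_i(y)$; provided $\lambda_1 > 0$ and at least one weight $\omega_i$ is positive, the right-hand side tends to infinity and $\rho$ is coercive (if all $\lambda_i$ or all $\omega_i$ vanish, $\rho$ is constant and the claim is vacuous). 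I expect the genuinely delicate points to be this measure-theoretic transfer of pointwise coercivity to coercivity of the integral via Fatou, together with the bookkeeping forced by the $y$-dependent sorting permutation in $\O_{\bm{\lambda}}$; the convexity and Lipschitz assertions reduce to standard composition facts once the operator is rewritten as a maximum of affine forms.
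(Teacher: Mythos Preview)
Your proposal is correct and follows essentially the same approach as the paper: both rely on the convexity and monotonicity of $\O_{\bm{\lambda}}$ under sorted nonnegative weights, composed with the convex inner functions $g_i$. The paper's proof is far terser---it simply cites \cite{grzybowski2011ordered} for the operator properties and invokes bounded subgradients of $\D$ for Lipschitzness---whereas you derive these facts explicitly via the rearrangement-inequality representation of $\O_{\bm{\lambda}}$ as a maximum of affine forms and handle coercivity of the expectations through Fatou's lemma.
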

\begin{proof}
The convexity and coercivity of $\rho$ follow from the convexity and coercivity of $\D$ and the convexity, monotonicity, and unboundedness of the ordered weighted sum $\O_{\bm\lambda}$~ \cite[see][]{grzybowski2011ordered}. 
Lipschitz continuity follows from the bounded subgradients of $\D$.
\end{proof}
These properties ensure the existence of minimizers and allow the convergence arguments of the solution approach proposed in the next section.

In the case where the random vectors $X_1, \ldots, X_n$ have finite support, with ${\rm supp}(\mu_i) = \{x_{i1}, \ldots, x_{im_i}\}$ for all $i \in [n]$, the problem simplifies to a discrete ({\rm d}) version, where the integrals are replaced by finite averages:
\begin{equation}\label{eq:omp_discrete}\tag{\rm OWP-SU$_{\rm d}$}
\min_{y \in \R^d} \sum_{i=1}^n \lambda_i \frac{\omega_{(i)}}{m_{(i)}} \sum_{j=1}^{m_{(i)}} \D(y, x_{(i)j}).
\end{equation}
This version of the problem has been widely studied in the literature as the continuous ordered median problem~\citep[see, e.g.,][]{blanco2014revisiting}. In the following result, we provide a valid convex formulation for the problem in case the $\bm{\lambda}$-weights are nonnegative and sorted in non-increasing order (the so-called \emph{convex} ordered median problem).
\begin{proposition}
    If $\D(\cdot,X)$ is convex and $\lambda_1 \geq \cdots \geq \lambda_n \geq 0$. Then, \ref{eq:omp_discrete} can be solved with the following convex program:
    \begin{align}
        \text{minimize} & \; \sum_{i=1}^n u_i + \sum_{j=1}^n v_j\nonumber\\
        \text{subject to} & \;  u_i + v_j \geq \lambda_j \frac{\omega_{i}}{m_{i}} \sum_{\ell=1}^{m_{i}} \D(y,x_{i\ell}), \; \forall i, j \in [n];\label{eq:sort}\\
        & \; u, v \in \R^n_+;\nonumber\\
        & \; y \in \R^d.\nonumber
    \end{align}
\end{proposition}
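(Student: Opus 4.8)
The plan is to fix $y$ and recognize the ordered weighted sum as the optimal value of a linear assignment problem, then pass to the linear-programming dual and finally minimize over $y$. Throughout, abbreviate the weighted empirical expected distance of demand $i$ by
\[
c_i(y) := \frac{\omega_i}{m_i}\sum_{\ell=1}^{m_i}\D(y,x_{i\ell}),
\]
so that the objective of \eqref{eq:omp_discrete} is $\rho_{\mathrm d}(y):=\O_{\bm\lambda}(c_1(y),\dots,c_n(y))=\sum_{i=1}^n \lambda_i c_{(i)}(y)$, where the subscript $(\cdot)$ sorts the $c_i(y)$ in non-increasing order. The whole argument reduces to proving, for each fixed $y$, the identity $\rho_{\mathrm d}(y) = \min\{\sum_i u_i+\sum_j v_j : u_i+v_j\ge \lambda_j c_i(y)\ \forall i,j,\ u,v\in\R^n_+\}$; minimizing both sides over $y\in\R^d$ then yields the claimed equivalence, since the joint minimization over $(y,u,v)$ is exactly the stated program.

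For the per-$y$ identity, I would first invoke the rearrangement inequality: because $\lambda_1\ge\cdots\ge\lambda_n\ge 0$ and the $c_{(i)}(y)$ are already sorted in non-increasing order, pairing the largest weights with the largest costs is optimal, so that $\rho_{\mathrm d}(y)=\max_{\sigma\in\mathscr{S}_n}\sum_{i=1}^n \lambda_i c_{\sigma(i)}(y)$. Writing $\sigma$ via its permutation matrix $P$, this is a linear objective $\sum_{i,j}\lambda_j c_i(y)\,P_{ij}$ maximized over permutation matrices. Since the cost matrix $(\lambda_j c_i(y))_{i,j}$ is nonnegative, I would relax the row/column equalities to inequalities $\sum_j P_{ij}\le 1$, $\sum_i P_{ij}\le 1$, $P\ge 0$: the maximum over this (sub-permutation) polytope is attained at a vertex that, by nonnegativity of the costs, can be completed to a full permutation without decreasing the value, so the relaxed maximum still equals $\rho_{\mathrm d}(y)$.

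Taking the linear-programming dual of this relaxed primal gives precisely the minimization of $\sum_i u_i+\sum_j v_j$ subject to $u_i+v_j\ge\lambda_j c_i(y)$, and strong duality applies since the primal is feasible (e.g. $P=0$) and bounded. The one point needing care — the main obstacle — is justifying the sign constraint $u,v\in\R^n_+$: dualizing the equality-constrained assignment problem would yield free multipliers, but using the inequality-relaxed primal above, which is equivalent here thanks to the nonnegativity of $\lambda_j c_i(y)$, turns the coupling constraints into $\le$ inequalities whose dual multipliers are automatically nonnegative. Finally, I would remark that the resulting program is convex: the objective is linear, and each constraint function $\lambda_j c_i(y)-u_i-v_j$ is convex in $(y,u,v)$ because $\D(\cdot,x_{i\ell})$ is convex, $\omega_i/m_i\ge 0$, and $\lambda_j\ge 0$, so that each $\lambda_j c_i(y)$ is a nonnegative combination of convex functions.
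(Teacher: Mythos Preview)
Your proposal is correct and follows the standard assignment--duality route that underlies the result cited in the paper (the reformulation of convex ordered weighted sums in \cite{blanco2014revisiting}); the paper's own proof simply defers to that reference without spelling out the argument, whereas you have filled in the details carefully, including the point about relaxing to the sub-doubly-stochastic polytope so that the dual multipliers inherit nonnegativity.
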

\begin{proof}
    The result follows by adapting to our settings the reformulation of convex ordered weighted sums proposed by \cite{blanco2014revisiting}.
\end{proof}
Although geometric algorithms have been proposed for specific instances of this problem (e.g., with Euclidean distances, equal weights, or planar demand), modern convex optimization techniques have proven highly effective for solving it efficiently with off-the-shelf solvers. In particular, when $\D$ is a polyhedral-norm-based distance, the problem can be formulated as a linear program, whereas for $\ell_p$-norm-based distances it can be expressed as a $p$th-order cone program~\citep{blanco2014revisiting,blanco2024minimal}. Specifically, in the $\ell_p$-norm case, the constraints in~\eqref{eq:sort} can be reformulated as:
\begin{align*}
u_i + v_j &\geq \lambda_j \frac{\omega_{i}}{m_{i}} \sum_{\ell=1}^{m_{i}} z_{i\ell}, & \forall i, j \in [n],\\
z_{i\ell} &\geq \|y - x_{i\ell}\|_p, &\forall i \in [n], \ell \in [m_i],
\end{align*}
that can be efficiently represented as a set of linear and second-order cone constraints~\citep{blanco2024minimal} and possesses a known controlled complexity~\citep{blanco2025complexity}.

In what follows, we provide a simple illustrative example that highlights the differences between solving the ordered Weber problem under deterministic and uncertain demand representations. In the deterministic setting, each demand distribution is replaced by its centroid, yielding the classical ordered Weber formulation. In contrast, the uncertain model accounts for the full spatial probability distribution of each demand region.
\begin{ex}
We consider one of the datasets introduced in~\cite{kalczynski2025weber}, where the authors assumed that each demand is uniformly distributed over a disc with a given center and radius. Figure~\ref{ex:1} displays the optimal solutions for two classical ordered models: the \texttt{median} (min-sum) and the \texttt{center} (min-max) problems, under different, generally non-uniform spatial demand distributions (with color intensity indicating cumulative probability). The symbol $\times$ denotes the solution of the uncertain model (also labeled as (\texttt{u}) in the legend), whereas $\blacksquare$ represents the solution to the deterministic approximation obtained by concentrating each demand at its center (labeled as (\texttt{d}) in the legend).
    \begin{figure}
\centering
\fbox{\includegraphics[width=0.45\textwidth]{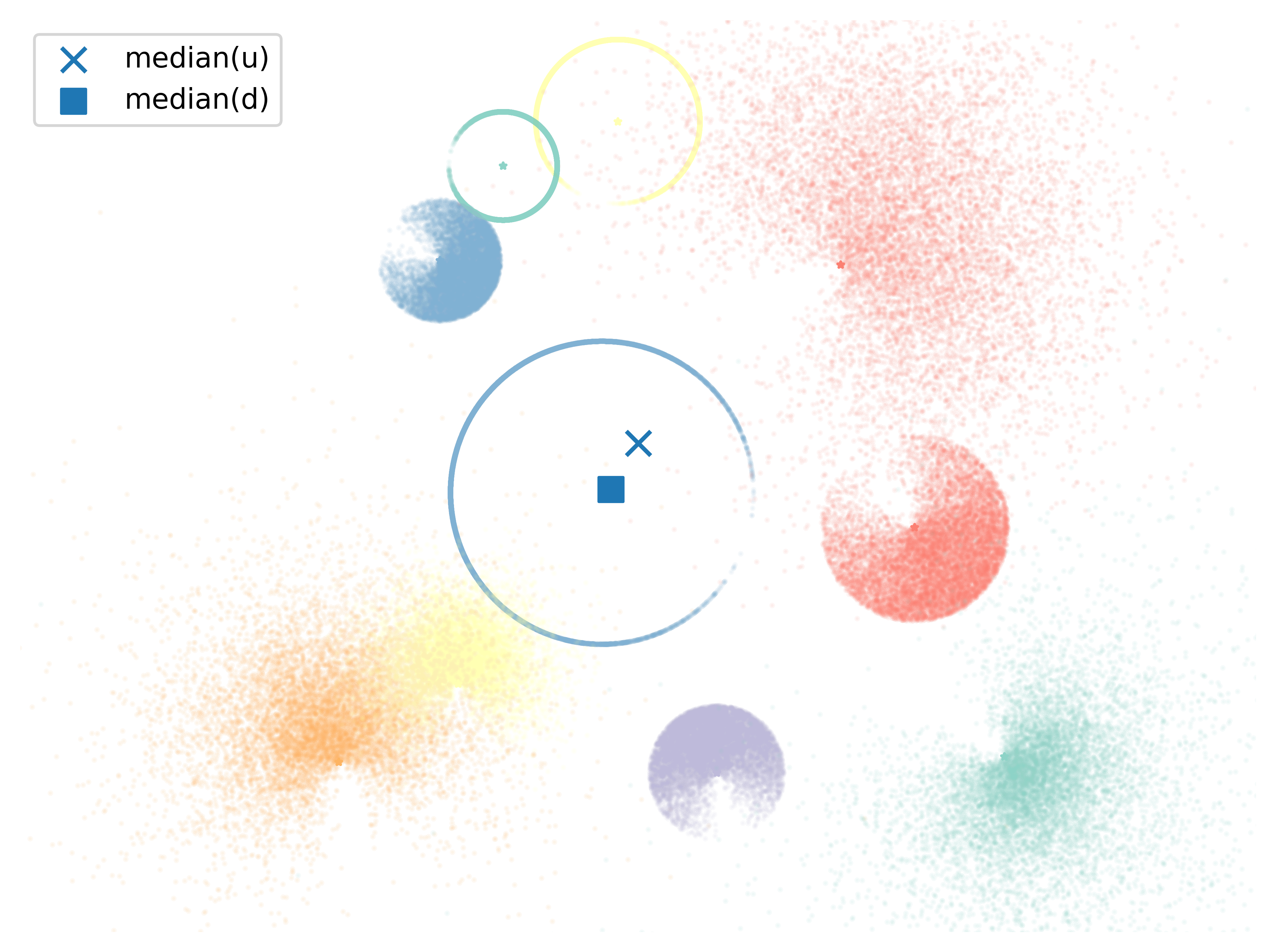}}~\fbox{\includegraphics[width=0.45\textwidth]{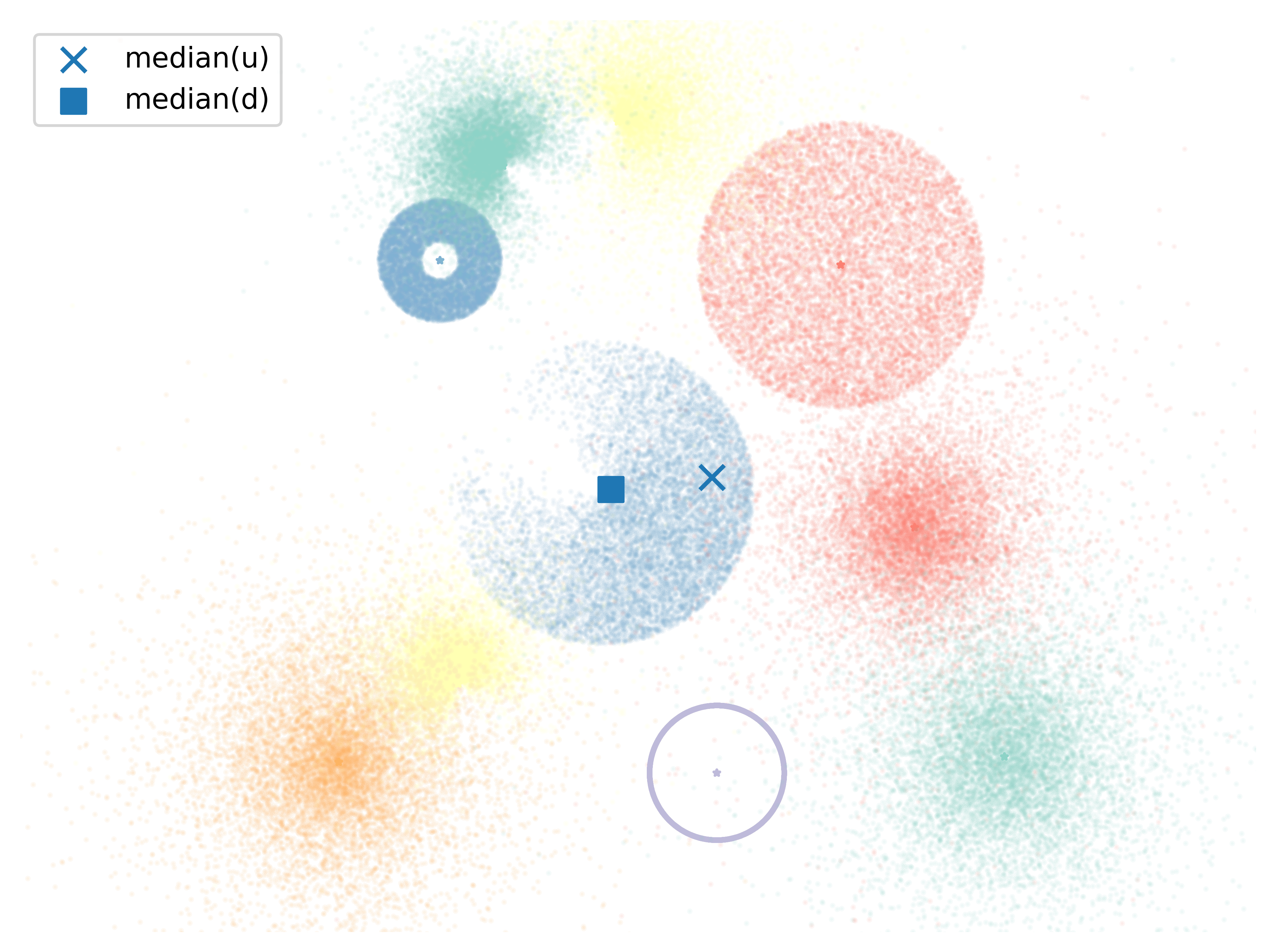}}\\
\fbox{\includegraphics[width=0.45\textwidth]{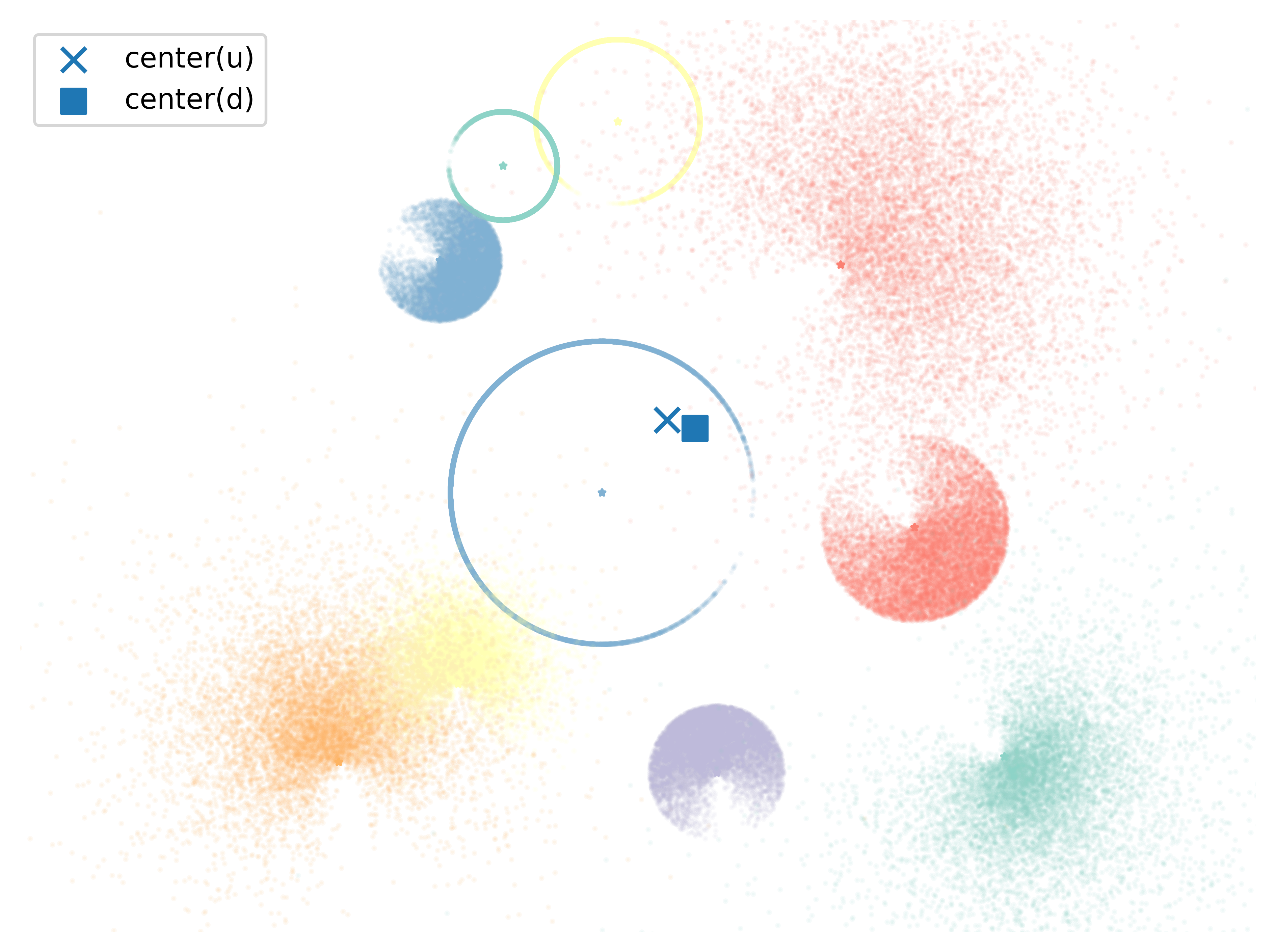}}~\fbox{\includegraphics[width=0.45\textwidth]{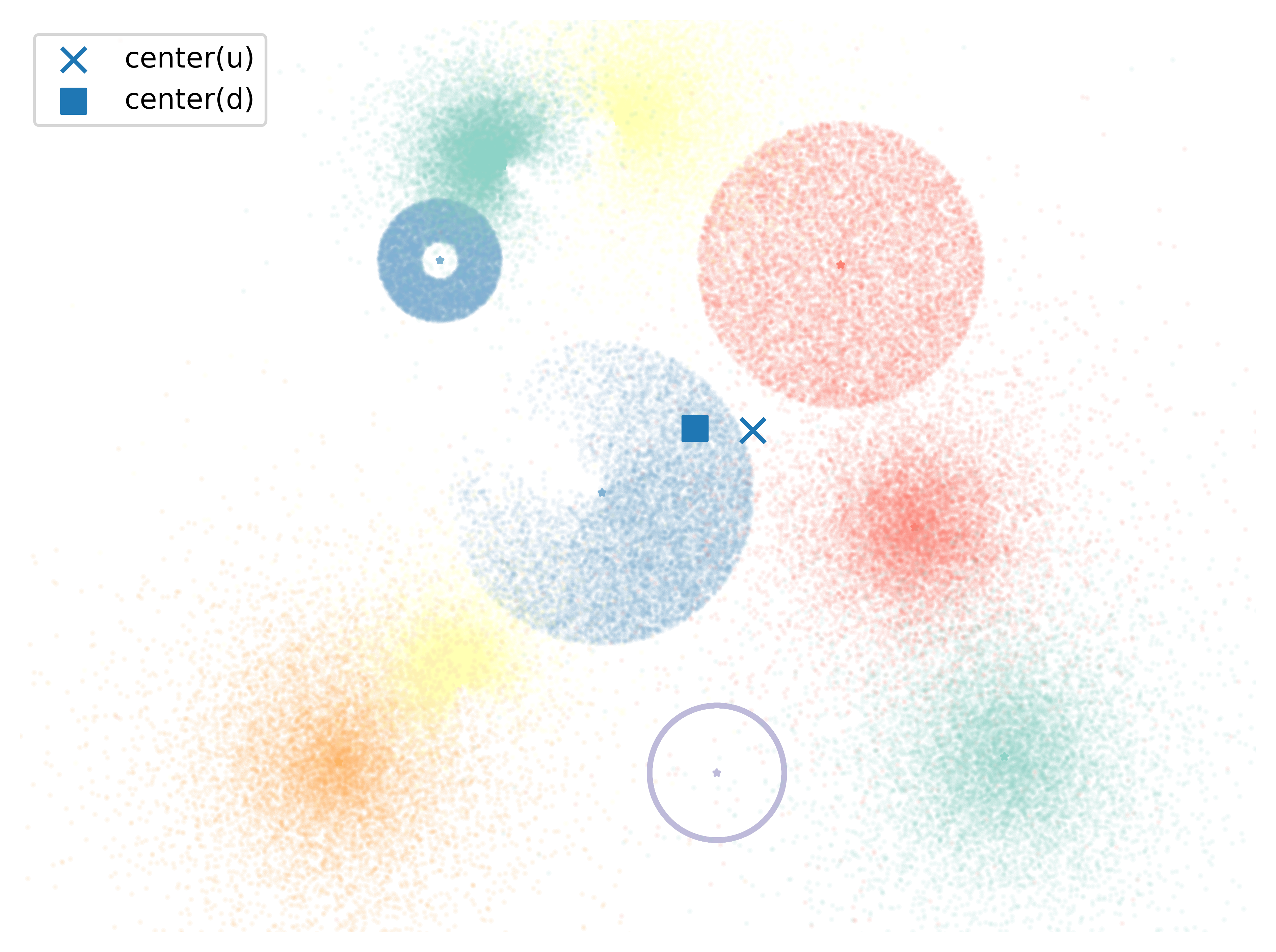}}
\caption{Solutions achieved with deterministic (\texttt{d}) and stochastic (\texttt{u}) versions of the \texttt{median} (top) and \texttt{center} (bottom) problems for different (non-symmetric non-uniform) demand distributions.\label{ex:1}}
    \end{figure}
One may notice that in each problem the two simplified deterministic versions are defined by the same parameters, and therefore they reach the same solution, as expected. In contrast, the stochastic problems capture the variability of the demand locations, balancing the solution according to the probability masses.
\end{ex}
The classical Weber problem, as well as its ordered extensions, verify that their solutions belong to the convex hull of the demand points. In the ordered Weber with spatially uncertain demand, in case the support of the probability measures for all the demands is compact (and then bounded), a similar result can also be derived. For the general case, we derive a structure result that provides information on how far an optimal solution to the problem is from a convex and compact set that concentrates a large mass of the probability for the demand.
\begin{proposition}\label{prop:eps-hull}
Let $X_i$ be a random vector drawn according to $\mu_i$ and $K_i\subset\R^d$ be a compact set such that
$\mu_i(K_i)\ge1-\varepsilon_i$, for every $i\in[n]$. Let $\D$ be a norm-based distance. Then, if $\bar\varepsilon :=  \max_{i\in [n]}\varepsilon_i<\frac{1}{2}$, every minimizer $y^\star$ of \ref{eq:owp} satisfies
\begin{equation*}
    \D\left(y^\star, K\right) \le \frac{\bar\varepsilon}{1-2\bar\varepsilon}\max\{\|z-w\|: z,w\in K\}, \text{ where } K:=\conv\left(\bigcup_{i=1}^n K_i\right).
\end{equation*}
\end{proposition}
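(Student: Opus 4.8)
The plan is to argue by contradiction: assuming a minimizer $y^\star$ lies farther from $K$ than the asserted bound, I will produce a feasible descent direction and contradict optimality. A minimizer exists by the convexity--coercivity Proposition established above, since $\D(\cdot,X)$ convex and coercive makes $\rho$ convex and coercive. Write $g_i(y):=\E[\D(y,X_i)]=\E\|y-X_i\|$, set $\delta:=\D(y^\star,K)$ and $D:=\max\{\|z-w\|:z,w\in K\}$, and suppose for contradiction that $\delta>\frac{\bar\varepsilon}{1-2\bar\varepsilon}D$. Since $K$ is compact and convex, I would fix a nearest point $p\in K$ with $\|y^\star-p\|=\delta$ and, from the normal-cone optimality of this projection, a dual unit vector $\xi$ (that is, $\|\xi\|_*=1$) satisfying $\langle\xi,y^\star-p\rangle=\delta$ and $\langle\xi,x-p\rangle\le0$ for every $x\in K$; this $\xi$ separates $y^\star$ from $K$. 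Then I would select a direction $h$ with $\|h\|=1$ and $\langle\xi,h\rangle=-1$, which points from $y^\star$ toward $K$.

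Next I would estimate the one-sided directional derivative $g_i'(y^\star;h)=\int\partial_h\|y^\star-x\|\,d\mu_i(x)$, where the interchange of expectation and difference quotient is justified by dominated convergence, the quotients being bounded by $\|h\|=1$. The functional $\xi$ separates two regimes. For $x\in K_i\subseteq K$ one has $\langle\xi,y^\star-x\rangle=\delta-\langle\xi,x-p\rangle\ge\delta$ together with $\|y^\star-x\|\le\|y^\star-p\|+\|p-x\|\le\delta+D$, and these yield the decay estimate $\partial_h\|y^\star-x\|\le-\frac{\langle\xi,y^\star-x\rangle}{\|y^\star-x\|}\le-\frac{\delta}{\delta+D}$ (for the Euclidean, or any smooth strictly convex, norm this is the exact value $\langle\nabla\|y^\star-x\|,h\rangle$); for the remaining mass I would only invoke the Lipschitz bound $\partial_h\|y^\star-x\|\le\|h\|=1$. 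Integrating against $\mu_i$ and using $\mu_i(K_i)\ge1-\varepsilon_i$ gives $g_i'(y^\star;h)\le-\frac{\delta}{\delta+D}(1-\varepsilon_i)+\varepsilon_i$, which is strictly negative precisely when $\delta(1-2\varepsilon_i)>\varepsilon_i D$, i.e.\ when $\delta>\frac{\varepsilon_i}{1-2\varepsilon_i}D$. As $\varepsilon\mapsto\frac{\varepsilon}{1-2\varepsilon}$ is increasing on $[0,\tfrac12)$, the standing assumption $\delta>\frac{\bar\varepsilon}{1-2\bar\varepsilon}D$ forces $g_i'(y^\star;h)<0$ simultaneously for all $i\in[n]$.

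Finally I would lift this componentwise descent to $\rho$. Because $\lambda_1\ge\cdots\ge\lambda_n\ge0$, the aggregator $\O_{\bm\lambda}$ is convex and isotone in its arguments; since each coordinate $\omega_i g_i'(y^\star;h)$ is negative (bounded above by some $-c<0$ when the weights are positive), the constant-shift identity $\O_{\bm\lambda}(\bm d-c\bm 1)=\O_{\bm\lambda}(\bm d)-c\sum_i\lambda_i$ together with isotonicity gives $\rho'(y^\star;h)\le-c\sum_i\lambda_i<0$ whenever $\sum_i\lambda_i>0$, contradicting the minimality of $y^\star$. Hence $\delta\le\frac{\bar\varepsilon}{1-2\bar\varepsilon}D$, which is the claim. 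I expect the main obstacle to be the per-point decay estimate $\partial_h\|y^\star-x\|\le-\frac{\delta}{\delta+D}$ on the high-probability set: it is transparent for the Euclidean (and smooth strictly convex) norm through the gradient formula, but for a general norm-based distance it must be extracted from the subdifferential $\partial\|z\|=\{\eta:\|\eta\|_*=1,\ \langle\eta,z\rangle=\|z\|\}$, where controlling $\max_{\eta}\langle\eta,h\rangle$ requires aligning $h$ with the face of the dual unit ball exposed by $y^\star-x$ rather than with $\xi$ alone; the transfer through $\O_{\bm\lambda}$ (ties in the ordering and the degenerate case $\sum_i\lambda_i=0$) is the secondary point to treat with care.
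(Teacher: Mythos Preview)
Your proposal is correct and follows essentially the same route as the paper: separate $y^\star$ from $K$ by a dual unit functional at the projection point, bound the directional derivative of each $\E\|y-X_i\|$ by splitting the integral into $K$ (contributing at most $-(1-\varepsilon_i)\,\delta/(\delta+D)$ via the ratio $\langle v,y^\star-x\rangle/\|y^\star-x\|\ge\delta/(\delta+D)$) and its complement (contributing at most $\varepsilon_i$), and then invoke monotonicity of $\O_{\bm\lambda}$ to contradict optimality. Your caution about the per-point decay estimate for general norms is well placed---the paper itself writes $\tfrac{d}{dt}\|y-tv-x\|=-\langle v,y-x\rangle/\|y-x\|$, i.e.\ the Euclidean gradient identity, without further justification, so the same gap is present there.
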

\begin{proof}
Fix $y\notin K$. Let $\pi_K(y)$ be the closest point of $K$ to $y$ and $\delta:=\|y-\pi_K(y)\|=\D(y,K)>0$. By the supporting hyperplane theorem, there exist  $v$ with $\|v\|_*=1$ (here $\|\cdot\|_*$ stands for the dual norm of $\|\cdot\|$) such that $\langle v,y-\pi_K(y)\rangle=\delta$ and $\langle v,z-\pi_K(y)\rangle\le 0$ for all $z\in K$. Thereby, $\langle v,y-z\rangle=\langle v,y-\pi_K(y)\rangle+\langle v,\pi_K(y)-z\rangle\ge \delta$. Hence, on $X_i\in K$
\begin{equation}\label{eq:compact}
    \frac{\langle v,y-X_i\rangle}{\|y-X_i\|}\ge\frac{\delta}{\|y-X_i\|}\ge\frac{\delta}{\delta+ \max_{z, w \in K}\|z-w\|},
\end{equation}
where the last inequality comes from the triangle inequality using $\pi_K(y)$ as intermediary.
We have the following one-sided directional derivative bound
\begin{align*}
        \partial_{(y;-v)} \E[\|y - X_i\|]&:=\frac{d}{dt}\E\left[\|y-tv-X_i\|\right] & [t\to 0^+] \\
        &=\frac{d}{dt}\int_{x\in \R^d}\|y-tv-x\|d\mu_i & [t\to 0^+] \\
        & = \int_{x\in \R^d}\frac{d}{dt}\|y-tv-x\|d\mu_i & [t\to 0^+] \\
        & = -\int_{x\in \R^d}\frac{\langle v, y-x\rangle}{\|y-x\|}d\mu_i & \\
        & = -\int_{x\in K}\frac{\langle v, y-x\rangle}{\|y-x\|}d\mu_i - \int_{x\in \R^d\setminus K}\frac{\langle v, y-x\rangle}{\|y-x\|}d\mu_i &\\
        & \leq (\varepsilon_i-1)\frac{\delta}{\delta+\max_{z, w \in K}\|z-w\|}+\varepsilon_i,
\end{align*}
where the last inequality follows directly from \eqref{eq:compact}, the H\"older inequality, and $\mu_i(K)\geq 1-\varepsilon_i$.

If $\delta>\frac{\bar\varepsilon}{1-2\bar\varepsilon}\max_{z, w \in K}\|z-w\|$, then $\partial_{(y;-v)} \E[\|y-X_i\|]<0$ for every $i\in [n]$. 
Therefore, every component $\omega_i\E\left[\|y-X_i\|\right]$ strictly decreases along $-v$.

Finally, since the ordered weighted sum, $\O_{\bm\lambda}$, with nonnegative $\bm \lambda$-weights is monotone in each argument, then a uniform componentwise decrease in $\left(\omega_i\E[\|y-X_i\|]\right)_{i=1}^n$ implies a strict decrease of the objective function. This conclusion contradicts the optimality of any minimizer lying at distance $\delta > \frac{\bar\varepsilon}{1-2\bar\varepsilon}\max_{z, w \in K}\|z-w\|$ from $K$, and then any minimizer $y^\star$ must satisfy the stated bound.
\end{proof}
The above result allows us to ensure that, if the $\varepsilon_i$ are small, the solution to our problem is close to the convex hull of the (almost) support of the demands. If all the supports of the demands lie in compact sets, then $\bar{\varepsilon} = 0$, and the solution belongs to the convex hull of those supports. This situation covers most of the prior work on the topic, where the demands are assumed to be supported on compact regions.

\section{Sample Average Approximation and Convergence Analysis}\label{sec:saa}

This section is devoted to proposing an iterative solution scheme for solving \ref{eq:owp} and proving its convergence under mild assumptions.
\subsection*{Adaptive SAA Algorithm}
Note that the \ref{eq:owp} involves expectations with respect to random demand locations, whose exact evaluation is typically intractable even for very simple distributional forms. A natural and widely used approach to approximate such stochastic optimization problems is the \emph{sample average approximation} (SAA, for short) method~\citep{ShapiroHomemDeMello2000}, in which the expectations in the objective function are replaced by empirical finite averages computed from independent realizations of the underlying random vectors, as in \ref{eq:omp_discrete}. This section formalizes the SAA framework for the general problem and presents classical results that guarantee its asymptotic consistency and convergence properties under mild assumptions.

Given a collection of independent samples $\mathbf{A}=\{A_1, \ldots, A_n\}$, with 
$A_i = \{ x_{i1}, \ldots, x_{im_i} \}$ drawn according to $\mu_i$ for every $i\in[n]$, 
we define the SAA objective function for \ref{eq:owp} by
\begin{equation*}
\label{eq:saa}
\rho_\mathbf{A}(y)
:= \sum_{i=1}^n \lambda_i
\frac{\omega_{(i)}}{m_{(i)}}
\sum_{j=1}^{m_{(i)}} \mathrm{D}(y, x_{(i)j}),
\end{equation*}
where $\frac{\omega_{(i)}}{m_{(i)}}
\sum_{j=1}^{m_{(i)}} \mathrm{D}(y, x_{(i)j}) \geq \frac{\omega_{(i+1)}}{m_{(i+1)}} \sum_{j=1}^{m_{(i+1)}} \mathrm{D}(y, x_{(i+1)j})$, for all $i \in [n-1]$.

The solution of the problem under such an objective function is denoted as
\begin{equation*}
\label{eq:saa-problem}
\rho^\star_\mathbf{A} := \min_{y \in \mathbb{R}^d} \rho_\mathbf{A}(y),
\end{equation*}
and $y^\star_\mathbf{A}$ stands for the SAA minimizer of the collection of samples $\mathbf{A}$.

\SetKwInput{KwIn}{Input}
\SetKwInput{KwOut}{Output}
\SetKwInput{KwData}{Data}
\SetKwInput{KwInit}{Initialize}
\SetKw{Break}{break}
\SetKw{KwTo}{to}
\begin{algorithm}[ht]
\caption{Adaptive Sample Average Approximation (SAA)}\label{alg:saa}
\small\KwIn{Initial sample, $\mathbf{A}^{(0)}$ of sizes $\mathbf{m}^{0}=(m_1^0,\ldots,m_n^0)$, growth factor $\gamma>1$, tolerances $\varepsilon_1,\varepsilon_2>0$, maximum iterations $k_{\max}$, and validation sample $\mathbf{K}^{\rm val}$.}
\KwData{Distributions $(\mu_1,\ldots,\mu_n)$, weights $(\omega_1,\ldots,\omega_n)$.}

\For{$k \gets 0$ \KwTo $k_{\max}$}{
  \tcp*[h]{1) Draw training samples:} $A_i^{(k)} \gets \{x_{ij}^{(k)}\}_{j=1}^{m_i^k} \sim \mu_i$, $\mathbf{A}^{(k)} \gets \big\{A_i^{(k)}\big\}_{i=1}^n$.

  \tcp*[h]{2) Solve the SAA subproblem on $\mathbf{A}^{(k)}$:}  
  $\displaystyle y^{(k)} \in \arg\min_{y\in\mathbb{R}^d}\ \rho_{\mathbf{A}^{(k)}}(y)$. 
  
  \tcp*[h]{3) Bootstrap validation:} $\rho_{\mathbf{K}^{\rm val}}(y^{(k)})$,  $I^{(k)}\gets[\rho_{\mathbf{K}^{\rm val}}(y^{(k)}) \pm  r^{(k)}]$.

  \tcp*[h]{4) Per-group stability:}  
  
  \For{$i \leftarrow 1$ \KwTo $n$}{
    Compute contribution estimate $\rho_{A_i^{(k)}}(y^{(k)})$ and halfwidth $r_i^{(k)}$.
  }

  \tcp*[h]{5) Global stopping:}
  
  \If{$k>0$ \textbf{ and } $\forall i:\ \big| \rho_{A_i^{(k)}}(y^{(k)}) - \rho_{A_i^{(k-1)}}(y^{(k-1)}) \big| \le \varepsilon_{1}$ \textbf{ and } $r_i^{(k)} \le \varepsilon_{2}$}{
    $y^{\star} \gets y^{(k)}$,\quad $\rho^{\star} \gets \rho^\star_{\mathbf{A}^{(k)}}$,\quad $I^{\star} \gets I^{(k)}$ \;
    \Break
  }
  \tcp*[h]{6) Adaptive sample growth:}
  
  \For{$i \leftarrow 1$ \KwTo $n$}{
    \eIf{$\big| \rho_{A_i^{(k)}}(y^{(k)}) - \rho_{A_i^{(k-1)}}(y^{(k-1)}) \big| > \varepsilon_{1}$ \textbf{ or } $r_i^{(k)} > \varepsilon_{2}$}{
      $m_i^{k+1} \gets \left\lceil \gamma m_i^{k} \right\rceil$ \;
    }{
      $m_i^{k+1} \gets m_i^{k}$ \;
    }
  }
}
\If{$k>k_{\max}$}{
  $y^{\star} \gets y^{(k)}$,\quad $\rho^{\star} \gets \rho^\star_{\mathbf{A}^{(k)}}$,\quad $I^{\star} \gets I^{(k)}$ \;
}
\KwOut{Candidate solution $y^{\star}$, estimate ${\rho}^{\star}$, and confidence interval $I^{\star}$.}
\end{algorithm}
 
In Algorithm~\ref{alg:saa}, we show the pseudocode for the adaptive SAA that we propose to solve the ordered Weber problem with spatially uncertain demands, and 
where the expectation is approximated by empirical finite means over progressively refined sample sets. The procedure starts from an initial training sample $\mathbf{A}^{(0)}=\{A_i^{(0)}\}_{i=1}^n$ of sizes $\mathbf{m}^{0}=(m_1^0,\ldots,m_n^0)$ and a fixed validation sample $\mathbf{K}^{\rm val}$, and at iteration $k$ (i) solves the deterministic SAA subproblem $\min_y \rho_{\mathbf{A}^{(k)}}(y)$ to obtain $y^{(k)}:=y^\star_{\mathbf{A}^{(k)}}$, and (ii) evaluates its out-of-sample performance via the validation estimator $\rho_{\mathbf{K}^{\rm val}}(y^{(k)})$ together with a confidence interval $I^{(k)}=[\rho_{\mathbf{K}^{\rm val}}(y^{(k)})\pm r^{(k)}]$ (the halfwidth $r^{(k)}$ is obtained by a bootstrap on $\mathbf{K}^{\rm val}$). To monitor \emph{local} stability and target sampling effort, the algorithm tracks per-group contributions on the training sets, requiring for each $i\in[n]$ that the inter-iteration change $\big|\rho_{A_i^{(k)}}(y^{(k)})-\rho_{A_i^{(k-1)}}(y^{(k-1)})\big|$ and the associated halfwidth $r_i^{(k)}$ fall below tolerances $\varepsilon_1$ and $\varepsilon_2$, respectively. If these conditions hold simultaneously, the method returns $y^\star=y^{(k)}$, together with the empirical objective $\rho^\star=\rho^\star_{\mathbf{A}^{(k)}}$ and the validation-based interval $I^\star=I^{(k)}$. Otherwise, sampling is adaptively intensified only for unstable groups by a proportion $\gamma-1$, which concentrates computation where uncertainty remains. Using a held-out $\mathbf{K}^{\rm val}$ decouples selection and assessment, mitigating optimism in the SAA fit, whereas the groupwise stability test prevents premature stopping driven by aggregate cancellations. In practice, each SAA subproblem is deterministic and efficiently solvable (via conic programming) for fixed sample sizes, and the progressive targeted growth achieves accurate solutions with fewer total samples than a single-shot large SAA, while remaining flexible for heterogeneous distributions and weights, and scalable across problem dimensions.

\paragraph{\bf Bootstrap Validation Phase}

As mentioned above, in the proposed SAA-based approach, at each iteration, in the validation step, $B$ bootstrap replicates of the validation sample are chosen by resampling $K_i$ elements (with replacement) from the validation data, for each $i \in [n]$. The information retrieved in this sampling approach allows us to construct a confidence interval as follows.

Once the current solution $y^{(k)}$ is evaluated in the $\ell$th bootstrap function:
    \begin{equation*}
        \sum_{i=1}^n \lambda_i \frac{\omega_{(i)}}{K_{(i)}}\sum_{j =1}^{K_{(i)}} \mathrm{D}(y^{(k)}x_{(i)j}^{\ell}), \; \forall \ell\in [B],
    \end{equation*}
we construct $q_{\alpha/2}$ and $q_{1-\alpha/2}$, denoting the empirical quantiles of the objective attained by $y^{(k)}$ in the  $B$ bootstrap replicates corresponding to the lower and upper tails of level $\alpha$.
Then the $1-\alpha$ bootstrap quantile confidence interval for the true expected optimal value
is given by $
I_{1-\alpha}\bigl(y^{(k)}\bigr)
:=
\bigl[
    q_{\alpha/2},\,
    q_{1-\alpha/2}
\bigr]$.

The corresponding \emph{halfwidth} of the interval is defined as 
\begin{equation*}
    r^{(k)} := \max\Big\{\rho_{\mathbf{K}^{\rm val}}\bigl(y^{(k)}\bigr) - q_{\alpha/2},\, q_{1-\alpha/2} - \rho_{\mathbf{K}^{\rm val}}\bigl(y^{(k)}\bigr) \Big\},
\end{equation*}
and then, the interval 
$\bigl[\rho_{\mathbf{K}^{\rm val}}\bigl(y^{(k)}\bigr)\pm r^{(k)}\bigr]$
provides an approximate $1-\alpha$ confidence region for the
true expected objective value at $y^{(k)}$.
Consequently, the upper bound $\rho_{\mathbf{K}^{\rm val}}\bigl(y^{(k)}\bigr)+ r^{(k)}$ can be interpreted as a \emph{statistical certificate} on the true optimal value with coverage probability $1-\alpha$.
This upper bound serves as the validation-based stopping and comparison criterion in the progressive SAA procedure.

\paragraph{\bf Convergence Guarantees}

We adopt the following standard conditions in the study of convergence properties of SAA-based algorithms~\citep[see, e.g.,][]{ShapiroDentchevaRuszczynski2014}:
\begin{enumerate}[label={\rm (A\arabic*)}]
\item For each $i\in [n]$, $\{X_{ij}: j\in[m_i]\}$ are i.i.d.\ with law $\mu_i$, independent across $i$, and $m_i\to\infty$. \label{a1}
\item There exists $y_0\in\R^d$ with $\E\left[\D(y_0,X_i)\right]<\infty$ for all $i\in [n]$, and an integrable envelope $M(\xi)$ such that $\D(y,\xi) \le M(\xi)$ for all $y\in\R^d$,  e.g., $\D(y,\xi)\le a\|\xi\|+b(1+\|y\|)$ with $\E\left[\|\xi\|\right]<\infty$.\label{a2}
\item For each $\xi$, $y\mapsto \D(y,\xi)$ is continuous on $\R^d$.\label{a3}
\item $\rho(y)\to\infty$ as $\|y\|\to\infty$.\label{a4}
\item The minimizer $y^\star$ is unique.\label{a5}
\end{enumerate}

Assumptions \ref{a1}--\ref{a5} will ensure that our SAA-based approach provides consistent and stable estimates of the true optimal solution. \ref{a1} guarantees that the random samples used in the approximation are independent and identically distributed according to the true demand distributions, and that the sample size grows to infinity, allowing empirical averages to converge to their expectations by the law of large numbers. \ref{a2} ensures that the expected costs are finite and that the distance function does not grow too quickly with respect to the random data, preventing instability caused by large sample realizations. \ref{a3} requires that small changes in the facility location $y$ lead to small changes in the distance $\D(y,\xi)$, which guarantees that the objective function behaves smoothly and that limits and minimizations can be interchanged safely. \ref{a4}  ensures that the objective function grows unboundedly as $\|y\|\to\infty$, so that the problem admits at least one finite minimizer and solutions do not escape to infinity. Note that this condition holds for norm-based distances under $\E\left[\|X_i\|\right]<\infty$. Finally, \ref{a5} requires the optimal solution to be unique, which allows the sequence of minimizers of the subproblems solved at each iteration of our approach to converge to the true optimal point. In sum, these assumptions provide the theoretical foundation for the almost sure convergence of SAA-based estimators to the true solution of our problem.

For the sake of readability, in what follows, we will set the following notation. $\rho^\star$ and $y^\star$ denote the optimal value and an optimal solution of \ref{eq:owp}, respectively. Meanwhile, $\rho_{\mathbf{m}}(y)$ stands for the random variable $\rho_{\mathbf{A}}(y)$ where the sample collections $\mathbf{A}$ have fixed sizes $\mathbf{m}$, then we denote the random variable $\rho^\star_{\mathbf{m}}:=\rho^\star_{\mathbf{A}}$ and the random vector $y^\star_{\mathbf{m}}:=y^\star_{\mathbf{A}}$. 

With the above assumptions and notations, we got the following convergence result for Algorithm \ref{alg:saa}. 

\begin{proposition}\label{lem:ULLN}
Under \ref{a1}--\ref{a4}:
\begin{enumerate}
    \item $\dsup_{y\in\R^d}\big|\rho_\mathbf{m}(y)-\rho(y)\big|\xrightarrow{\min_i m_i} 0$ almost surely (a.s.). \label{prop:convergency1}
    \item $\rho^\star_\mathbf{m}\xrightarrow{\min_i m_i} \rho^\star$ almost surely, and every cluster point of $y^\star_{\mathbf{m}}$ is a $\rho$-minimizer. Additionally, if \ref{a5} holds, then $y^\star_{\mathbf{m}}\xrightarrow{\min_i m_i} y^\star$ almost surely.\label{prop:convergency2}
    \item If $\rho$ is differentiable at $y^\star$ and each $X_i$ has a continuous density in a neighborhood of $y^\star$ so that $\nabla^2\rho(y^\star)\succ0$, then, there exists a positive definite covariance matrix $\Sigma$ such that:
\begin{equation*}
    \sqrt{\sum_{i=1}^n m_i}(y^\star_{\mathbf m}-y^\star)\sim\mathcal N(0,\Sigma).
\end{equation*}
\label{prop:convergency3}
\end{enumerate}
\end{proposition}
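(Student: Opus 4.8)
The plan is to establish the three claims in sequence, reducing each to a classical result of SAA/M-estimation theory after exploiting the Lipschitz and convexity structure of the ordered aggregator $\O_{\bm\lambda}$ recorded in the preliminaries. For part~\ref{prop:convergency1} (the uniform law of large numbers), the first step I would take is to decouple the ordered structure from the sampling noise. Since $\lambda_1\ge\cdots\ge\lambda_n\ge 0$, the ordered weighted sum admits the representation $\O_{\bm\lambda}(\bm d)=\max_{\sigma\in\mathscr S_n}\sum_i\lambda_i d_{\sigma(i)}$, so it is convex and $\lambda_1$-Lipschitz with respect to $\|\cdot\|_1$. Writing $g_i(y):=\E[\D(y,X_i)]$ and $g_i^{\mathbf m}(y):=\frac1{m_i}\sum_j\D(y,x_{ij})$, this Lipschitz bound yields $|\rho_{\mathbf m}(y)-\rho(y)|\le\lambda_1\sum_i\omega_i|g_i^{\mathbf m}(y)-g_i(y)|$, which reduces the problem to a per-demand uniform LLN. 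Next, for each fixed $y$ the scalar SLLN (valid under the envelope of \ref{a2}) gives $g_i^{\mathbf m}(y)\to g_i(y)$ a.s.; since the maps $g_i^{\mathbf m},g_i$ are finite and convex on $\R^d$ (convexity of $\D(\cdot,X)$ under \ref{a3}), pointwise a.s.\ convergence of convex functions upgrades automatically to uniform convergence on every compact set. To reach the supremum over all of $\R^d$ claimed in the statement, I would use that for norm-based distances the excess $\D(y,x)-\|y\|$ is bounded by $\|x\|$ and asymptotically linear, so that $g_i^{\mathbf m}(y)-g_i(y)$ is controlled uniformly in $y$ by the fluctuation of the empirical mean $\frac1{m_i}\sum_j x_{ij}-\E[X_i]$, which vanishes a.s.\ under \ref{a1}--\ref{a2}.

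For part~\ref{prop:convergency2}, I would run the standard epi-convergence argument. Coercivity~\ref{a4} forces a compact sublevel set $L=\{\rho\le\rho^\star+1\}$; by the compact-uniform convergence of part~\ref{prop:convergency1}, the minimizers $y^\star_{\mathbf m}$ eventually lie in a fixed enlargement $L'$ of $L$ a.s., so no minimizing sequence escapes to infinity. On that compact set the sandwich $|\rho^\star_{\mathbf m}-\rho^\star|\le\dsup_{y\in L'}|\rho_{\mathbf m}(y)-\rho(y)|\to0$ gives value convergence, and for any cluster point $\bar y=\lim_k y^\star_{\mathbf m_k}$ the identity $\rho(\bar y)=\lim_k\rho_{\mathbf m_k}(y^\star_{\mathbf m_k})=\lim_k\rho^\star_{\mathbf m_k}=\rho^\star$ (combining uniform convergence with continuity) shows that $\bar y$ minimizes $\rho$. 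Under uniqueness~\ref{a5} the set of cluster points is the singleton $\{y^\star\}$, whence $y^\star_{\mathbf m}\to y^\star$ a.s.

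The hardest part is~\ref{prop:convergency3}, the asymptotic normality, because the sample objective is nonsmooth both at the sample points and across the data-dependent sorting breakpoints, so the classical Z-estimator machinery cannot be applied verbatim. The plan is first to argue that under the continuous-density hypothesis $\rho$ is $C^2$ near $y^\star$ (differentiating under the integral sign; the singularity of $\nabla_y\|y-x\|$ at $x=y$ is integrable against a continuous density) and that the strict ordering of $(\omega_i g_i(y^\star))_i$ stabilizes the aggregation locally, so that for all large $\min_i m_i$ the empirical ordering coincides with the true one on a neighborhood of $y^\star$ with probability tending to one, making $\rho_{\mathbf m}$ a genuinely differentiable sum of i.i.d.\ terms there. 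On that event the estimating equation $\nabla\rho_{\mathbf m}(y^\star_{\mathbf m})=0$ holds, and a Taylor expansion gives $y^\star_{\mathbf m}-y^\star=-[\nabla^2\rho(y^\star)]^{-1}\nabla\rho_{\mathbf m}(y^\star)+o_P(\|y^\star_{\mathbf m}-y^\star\|)$. Since $\nabla\rho_{\mathbf m}(y^\star)$ is an average of independent mean-zero scores $\lambda_i\omega_{(i)}\nabla_y\D(y^\star,x_{(i)j})$, the multivariate CLT yields $\sqrt{\sum_i m_i}\,\nabla\rho_{\mathbf m}(y^\star)\Rightarrow\mathcal N(0,\Sigma_0)$, and composing with the invertible Hessian $V:=\nabla^2\rho(y^\star)\succ0$ produces the stated limit with $\Sigma=V^{-1}\Sigma_0V^{-1}$. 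The delicate points on which I expect to spend the most care are justifying the interchange of differentiation and expectation at the nonsmooth kernel and controlling the contribution of the vanishing-probability event on which the empirical sorting disagrees with the true one; both are handled by the continuous-density assumption together with the local stability of the order statistics near a non-degenerate minimizer.
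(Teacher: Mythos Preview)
Your proposal follows the same three-step skeleton as the paper---uniform LLN, then epi/argmin convergence, then the sandwich $V^{-1}\Sigma_0 V^{-1}$ from a Taylor expansion of the estimating equation and a multivariate CLT---so parts~\ref{prop:convergency2} and~\ref{prop:convergency3} are essentially identical in spirit. In fact for part~\ref{prop:convergency3} you are more careful than the paper: you flag the need to stabilize the empirical ordering near $y^\star$ and to justify differentiating through the nonsmooth kernel, whereas the paper simply writes down the M-estimator expansion and cites the standard theory without discussing either issue.

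The one place where your route genuinely differs is the mechanism in part~\ref{prop:convergency1}. You first peel off the ordered aggregator via the $\ell_1$-Lipschitz bound $|\O_{\bm\lambda}(\bm a)-\O_{\bm\lambda}(\bm b)|\le\lambda_1\|\bm a-\bm b\|_1$, reducing to a per-demand uniform LLN, and then upgrade pointwise convergence to compact-uniform convergence using convexity of the maps $g_i^{\mathbf m},g_i$. The paper instead invokes a Glivenko--Cantelli ULLN directly from the integrable envelope~\ref{a2} and continuity~\ref{a3}, without isolating the ordered structure. Your decoupling is cleaner conceptually, but be aware that you attribute convexity of $\D(\cdot,X)$ to~\ref{a3}, which is only a continuity assumption; convexity is not among \ref{a1}--\ref{a4}. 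If you want to stay within the stated hypotheses you should replace the ``pointwise convex $\Rightarrow$ compact-uniform'' step by the envelope-based GC argument the paper uses (pointwise measurability, domination by $M(\xi)$ from~\ref{a2}, continuity~\ref{a3}). Either route then appeals to level-boundedness~\ref{a4} to pass from compact sets to the claimed global supremum, and both are somewhat informal at that last step.
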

\begin{proof}
We prove the three claims under \ref{a1}--\ref{a4} (and \ref{a5} where stated), by appealing to classical results in stochastic programming and M-estimation
\citep{ShapiroDentchevaRuszczynski2014,RockafellarWets1998}.
\begin{enumerate}
\item Fix a compact set $K\subset\R^d$. 
By assumptions \ref{a1}--\ref{a3}, the class 
$\{\xi \mapsto \D(y,\xi) : y\in K\}$ is pointwise measurable and dominated by the integrable envelope $M(\xi)$ from \ref{a2}. 
Hence, by a dominated (Glivenko–Cantelli) uniform law of large numbers (ULLN) for i.i.d. samples,
\begin{equation*}
    \sup_{y\in K}\Bigl|\rho_{\mathbf m}(y)-\rho(y)\Bigr|\xrightarrow{\min_i m_i}0 \quad \text{a.s.}
\end{equation*}
This follows by applying the scalar ULLN to each $i$, summing with weights $\omega_i$, and using independence across $i$ 
\citep[Th.~5.3]{ShapiroDentchevaRuszczynski2014}.
By level-boundedness \ref{a4}, the sublevel sets of $\rho$ are compact and, for large enough $\mathbf m$, contain the argmin sets of $\rho_{\mathbf m}$. 
Hence, the compact-$K$ ULLN extends to~\citep[see, e.g.][]{RockafellarWets1998}:
\begin{equation*}
    \sup_{y\in\R^d}\big|\rho_{\mathbf m}(y)-\rho(y)\big| \xrightarrow{\min_i m_i} 0 \quad \text{a.s.},
\end{equation*}
\item From \ref{prop:convergency1}. and \ref{a4}, the functions $\rho_{\mathbf m}$ epi-converge to $\rho$ almost surely.
Then, by the argmin continuity theorem for epi-convergence
\citep{RockafellarWets1998},
\begin{equation*}
    \rho_{\mathbf m}^\star\xrightarrow{\min_i m_i} \rho^\star \quad \text{a.s.}, 
\qquad
\text{and every cluster point of } y^\star_{\mathbf m} \text{ minimizes } \rho.
\end{equation*}

If the minimizer is unique \ref{a5}, then $y^\star_{\mathbf m}\xrightarrow{\min_i m_i} y^\star$ almost surely
\citep[Th.~5.4]{ShapiroDentchevaRuszczynski2014}.
\item Assume $\rho$ is differentiable at $y^\star$ with positive definite Hessian $\nabla^2\rho(y^\star)\succ0$, 
and each $X_i$ has a continuous density near $y^\star$ (ensuring interchange of differentiation and expectation).
The first-order optimality of $y^\star_{\mathbf m}$ yields $\nabla\rho_{\mathbf m}(y^\star_{\mathbf m})=0$.
Expanding around $y^\star$,
\begin{equation*}
    0 = \nabla\rho_{\mathbf m}(y^\star) 
      + \bigl[\nabla^2\rho(y^\star)+o_p(1)\bigr](y^\star_{\mathbf m}-y^\star).
\end{equation*}

By \ref{a1}--\ref{a3} and the multivariate central limit theorem,
\begin{equation*}
    \sqrt{\bar m}\nabla\rho_{\mathbf m}(y^\star)
\sim
\mathcal N(0,\Omega),
\qquad 
\Omega := \sum_{i=1}^n \omega_i^2{\rm Var}\big(\nabla_y\D(y^\star,X_i)\big),
\end{equation*}
where $\bar m:=\sum_i m_i$.
Applying Slutsky’s Theorem~\citep[see, e.g.][]{goldberger1970econometric} gives
\begin{equation*}
    \sqrt{\bar m}(y^\star_{\mathbf m}-y^\star)
\sim
\mathcal N\big(0,\Sigma\big),
\qquad 
\Sigma := \big[\nabla^2\rho(y^\star)\big]^{-1}\Omega\big[\nabla^2\rho(y^\star)\big]^{-1},
\end{equation*}
which is positive definite since $\nabla^2\rho(y^\star)\succ0$.
This is the standard M-estimation limit distribution
\citep[see, e.g.,][]{ShapiroDentchevaRuszczynski2014}.
\end{enumerate}
\end{proof}
\begin{remark}
Under mild regularity conditions, the assumptions \ref{a1}--\ref{a4} can be substantially simplified. Indeed, if each random vector $X_i$ has finite first moments, the distance $\D$ is induced by a norm $\|\cdot\|$, and there exist compact sets $K_i\subset\R^d$ such that $\mu_i(K_i)\ge 1-\varepsilon_i$ for all $i\in[n]$, as in Proposition \ref{prop:eps-hull}, then all the required regularity conditions are automatically satisfied. In this setting, \ref{a1} holds under standard i.i.d.\ sampling schemes, \ref{a2} follows directly from the finiteness of the first moments, and \ref{a3} and \ref{a4} are immediate properties of norm-based distances.

On the other hand, the uniqueness of the optimal solution of the problem \ref{a5} is more difficult to ensure. For instance, if $\D$ is induced by an $\ell_p$ norm with $1<p<\infty$ and all weights $\lambda_i$ are strictly positive and \emph{strictly monotone}, then on each region where the ordering is fixed, the objective function is a strictly convex positive combination of strictly convex functions, and the region changes only on measure-zero boundaries. This yields a \emph{unique} global minimizer under mild non-degeneracy (no identity of the distances along a nontrivial segment). If the $\bm \lambda$ contains ties or zeros, strict convexity can be lost and multiple minimizers may arise, uniqueness then requires additional genericity assumptions that exclude ordering ties at the optimum. 
\end{remark}
The above result establishes the asymptotic validity and statistical consistency of the proposed adaptive SAA approach. 
Under standard regularity assumptions \ref{a1}--\ref{a5}, the empirical objective function $\rho_{\mathbf{m}}$ converges uniformly to its population counterpart $\rho$, ensuring that both the optimal values and minimizers of the SAA problems provide consistent estimators of their true stochastic counterparts. 
In particular, \ref{prop:convergency1}. guarantees a uniform law of large numbers, \ref{prop:convergency2}. proves almost sure convergence of optimal values and solutions, and \ref{prop:convergency3}. establishes the asymptotic normality of the estimators, thereby quantifying the sampling-induced variability around the true optimizer $y^\star$. 
These results justify the use of Algorithm~\ref{alg:saa} as a statistically sound and computationally tractable method for solving stochastic optimization problems, where the progressive refinement of the sample sizes preserves consistency while adaptively allocating computational effort to the most uncertain components.

 Having established the basic assumptions that ensure existence and measurability of optimal solutions, we now move beyond asymptotic convergence and provide a quantitative assessment of the sampling error inherent to the SAA estimator. In particular, we derive a finite-sample bound that characterizes how the expected deviation between the true stochastic objective and its empirical counterpart decays with the sample size. Such estimates are fundamental to understanding the stability and reliability of this type of sample average approximations.
\begin{theorem}
\label{thm:finite-sample-epshull-detailed}
Assume \ref{a1}--\ref{a3} and \ref{a5} hold, $\D$ is a norm-based distance, and the conditions of Proposition~\ref{prop:eps-hull} are satisfied with 
$\bar\varepsilon:=\max_{i\in[n]}\varepsilon_i<\tfrac12$. 
Let $K:=\conv\big(\bigcup_{i=1}^n K_i\big)$ and 
\begin{equation*}
    r_{\varepsilon}:=\frac{\bar\varepsilon}{1-2\bar\varepsilon}\,\max\{\|z-w\|:z,w\in K\}. 
\end{equation*}

Define $\mathcal Y_\varepsilon:=\{y\in\R^d:\D(y,K)\le r_{\varepsilon}\}$.  
Let $\tilde y^\star_{\mathbf m}\in\arg\min_{y\in\mathcal Y_\varepsilon}\rho_{\mathbf m}(y)$ be any SAA minimizer restricted to $\mathcal Y_\varepsilon$. 
Then there exists a constant $\Delta_\varepsilon>0$ (depending only on $\mathcal Y_\varepsilon$ through its metric entropy) such that
\begin{equation*}
    \mathbb{E}\left[\big|\rho(\tilde y^\star_{\mathbf m})-\rho^\star\big|\right]
\le \frac{\Delta_\varepsilon\,L}{\sqrt{\min_i m_i}}\,.
\end{equation*}
\end{theorem}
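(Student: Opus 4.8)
The plan is to reduce the quantity $\E\big[|\rho(\tilde y^\star_{\mathbf m})-\rho^\star|\big]$ to the \emph{expected uniform deviation} of $\rho_{\mathbf m}$ from $\rho$ over the compact localization set $\mathcal Y_\varepsilon$, and then to control that deviation by a standard empirical-process (chaining) argument producing the advertised $1/\sqrt{\min_i m_i}$ rate. First I would invoke Proposition~\ref{prop:eps-hull}: since $\bar\varepsilon<\tfrac12$, the true minimizer satisfies $\D(y^\star,K)\le r_\varepsilon$, hence $y^\star\in\mathcal Y_\varepsilon$ and consequently $\rho^\star=\min_{y\in\mathcal Y_\varepsilon}\rho(y)$. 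Because $\tilde y^\star_{\mathbf m}$ minimizes $\rho_{\mathbf m}$ over $\mathcal Y_\varepsilon$ while $y^\star$ is feasible there, the usual SAA sandwich applies:
\begin{equation*}
0\le \rho(\tilde y^\star_{\mathbf m})-\rho^\star
=\big[\rho(\tilde y^\star_{\mathbf m})-\rho_{\mathbf m}(\tilde y^\star_{\mathbf m})\big]
+\big[\rho_{\mathbf m}(\tilde y^\star_{\mathbf m})-\rho_{\mathbf m}(y^\star)\big]
+\big[\rho_{\mathbf m}(y^\star)-\rho(y^\star)\big]
\le 2\sup_{y\in\mathcal Y_\varepsilon}\big|\rho_{\mathbf m}(y)-\rho(y)\big|,
\end{equation*}
the middle bracket being nonpositive. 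Taking expectations, it remains to bound $\E\big[\sup_{y\in\mathcal Y_\varepsilon}|\rho_{\mathbf m}(y)-\rho(y)|\big]$.

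The second step strips the ordering from the analysis. Since $\lambda_1\ge\cdots\ge\lambda_n\ge0$, the rearrangement inequality yields $\O_{\bm\lambda}(\bm a)=\max_{\sigma}\sum_i\lambda_i a_{\sigma(i)}$, so $\O_{\bm\lambda}$ is convex and $\lambda_1$-Lipschitz in the $\ell_1$ norm (its subgradients are permutations of $\bm\lambda$). Applying this with $a_i(y)=\tfrac{\omega_i}{m_i}\sum_j\D(y,x_{ij})$ and $b_i(y)=\omega_i\E[\D(y,X_i)]$ gives
\begin{equation*}
\sup_{y\in\mathcal Y_\varepsilon}\big|\rho_{\mathbf m}(y)-\rho(y)\big|
\le \lambda_1\sum_{i=1}^n\omega_i\,\sup_{y\in\mathcal Y_\varepsilon}\big|\hat g_i(y)-g_i(y)\big|,
\end{equation*}
where $g_i(y)=\E[\D(y,X_i)]$ and $\hat g_i(y)=\tfrac1{m_i}\sum_j\D(y,x_{ij})$. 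This isolates $n$ one-group uniform deviations, each a genuine empirical process over a Lipschitz function class.

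In the third step I would bound each per-group deviation. For fixed $i$, the class $\mathcal F_i=\{y\mapsto\D(y,x):y\in\mathcal Y_\varepsilon\}$ is indexed by the compact set $\mathcal Y_\varepsilon$, and for a norm-based distance $y\mapsto\D(y,x)$ is $L$-Lipschitz uniformly in $x$; assumption \ref{a2} provides the integrable envelope that makes the process well defined. By symmetrization and the Dudley entropy integral, the expected supremum is controlled by the Rademacher complexity of $\mathcal F_i$, and $L$-Lipschitzness transfers the covering numbers of $\mathcal Y_\varepsilon$ to $\mathcal F_i$, so $\log N(\eta,\mathcal F_i)\lesssim d\log\!\big(L\,\mathrm{diam}(\mathcal Y_\varepsilon)/\eta\big)$. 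The entropy integral then converges and gives
\begin{equation*}
\E\Big[\sup_{y\in\mathcal Y_\varepsilon}\big|\hat g_i(y)-g_i(y)\big|\Big]\le \frac{C_{\mathcal Y_\varepsilon}\,L}{\sqrt{m_i}},
\end{equation*}
with $C_{\mathcal Y_\varepsilon}$ depending on $\mathcal Y_\varepsilon$ only through its metric entropy (equivalently, through $d$ and $\mathrm{diam}(\mathcal Y_\varepsilon)$). Combining the three steps and using $1/\sqrt{m_i}\le 1/\sqrt{\min_i m_i}$, all $i$-dependent factors collapse into the single constant $\Delta_\varepsilon:=2\lambda_1 C_{\mathcal Y_\varepsilon}\sum_i\omega_i$, which yields the claimed bound.

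Steps~1 and~2 are essentially routine once $y^\star\in\mathcal Y_\varepsilon$ is secured through Proposition~\ref{prop:eps-hull}. I expect the main obstacle to be Step~3: producing the $1/\sqrt{m_i}$ rate with a constant that depends on $\mathcal Y_\varepsilon$ \emph{only} through its metric entropy. The delicate points are verifying that the Lipschitz parametrization cleanly transfers the Euclidean covering numbers of $\mathcal Y_\varepsilon$ to the function class, and ensuring the Dudley integral converges — which relies essentially on the compactness (boundedness) of $\mathcal Y_\varepsilon$, itself a consequence of the localization established in Step~1. On all of $\R^d$ the supremum would be infinite, so the role of Proposition~\ref{prop:eps-hull} in confining the analysis to $\mathcal Y_\varepsilon$ is what makes the finite-sample estimate possible.
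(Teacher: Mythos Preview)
Your proposal is correct and follows the paper's own proof in its skeleton: invoke Proposition~\ref{prop:eps-hull} to localize $y^\star\in\mathcal Y_\varepsilon$, apply the standard SAA sandwich to get $|\rho(\tilde y^\star_{\mathbf m})-\rho^\star|\le 2\sup_{y\in\mathcal Y_\varepsilon}|\rho_{\mathbf m}(y)-\rho(y)|$, and then bound the expected uniform deviation by a Rademacher/entropy argument over the compact set $\mathcal Y_\varepsilon$.

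The one substantive addition in your route is Step~2, where you strip the ordering via the $\lambda_1$-Lipschitz property of $\O_{\bm\lambda}$ in $\ell_1$ and reduce to $n$ per-group empirical processes $\hat g_i-g_i$. The paper does not make this reduction explicit: it simply asserts that $\rho$ and $\rho_{\mathbf m}$ are $L$-Lipschitz (with $L$ the Lipschitz constant of $\rho$, not of $\D$) and invokes a generic ``Rademacher-complexity bound for Lipschitz empirical processes on compact metric spaces'' to obtain $\E[\varepsilon_{\mathbf m}]\le \tilde\Delta_\varepsilon L/\sqrt{\min_i m_i}$. Your decomposition is the cleaner way to justify that step, since $\rho_{\mathbf m}$ is \emph{not} a single i.i.d.\ empirical average and the standard symmetrization/chaining machinery applies most directly to each $\hat g_i$. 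What the paper's terser version buys is that the constant is expressed through the Lipschitz constant of $\rho$ itself; what your version buys is a transparent bookkeeping of how $\lambda_1$, the $\omega_i$, and the metric entropy of $\mathcal Y_\varepsilon$ enter the final constant $\Delta_\varepsilon$. A minor slip: in Step~3 your class $\mathcal F_i$ should consist of functions $x\mapsto \D(y,x)$ indexed by $y\in\mathcal Y_\varepsilon$, not $y\mapsto\D(y,x)$.
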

\begin{proof}
By Proposition \ref{prop:eps-hull}, $y^\star$ belongs to the compact set $\mathcal Y_\varepsilon$. Hence $
\rho^\star=\min_{y\in\mathbb R^d}\rho(y)=\min_{y\in\mathcal Y_\varepsilon}\rho(y)$. 

Let $\tilde y^\star_{\mathbf m}\in\arg\min_{y\in\mathcal Y_\varepsilon}\rho_{\mathbf m}(y)$ be any SAA minimizer restricted to $\mathcal Y_\varepsilon$. Then
$$
\rho(\tilde y^\star_{\mathbf m})
\le \rho_{\mathbf m}(\tilde y^\star_{\mathbf m})+\varepsilon_{\mathbf m}
\le \rho_{\mathbf m}(y^\star)+\varepsilon_{\mathbf m}
\le \rho(y^\star)+2\varepsilon_{\mathbf m}
= \rho^\star+2\varepsilon_{\mathbf m},
$$
where $\varepsilon_{\mathbf m}:=\sup_{y\in\mathcal Y_\varepsilon}\big|\rho_{\mathbf m}(y)-\rho(y)\big|$. Since  $\rho^\star\le \rho(\tilde y^\star_{\mathbf m})$, we obtain that $
\big|\rho(\tilde y^\star_{\mathbf m})-\rho^\star\big|\le 2\,\varepsilon_{\mathbf m}$. 

Assumptions \ref{a1}--\ref{a3} ensure that $\rho$ and $\rho_{\mathbf m}$ are well defined and continuous; by hypothesis $\rho$ is $L$–Lipschitz on $\mathbb R^d$, and $\rho_{\mathbf m}$ is then also $L$–Lipschitz. Hence the centered process $y\mapsto \rho_{\mathbf m}(y)-\rho(y)$ is $2L$–Lipschitz on the compact metric space $(\mathcal Y_\varepsilon,\D)$. The  Rademacher-complexity bounds for Lipschitz empirical processes
on compact metric spaces \citep[see, e.g.,][]{ShapiroDentchevaRuszczynski2014} result in the bound:
$$
\mathbb E[\varepsilon_{\mathbf m}] \le\frac{\tilde\Delta_\varepsilon L}{\sqrt{\min_i m_i}}.
$$
where $\tilde\Delta_\varepsilon>0$ does not depend on $\mathbf m$ (but in how many small balls are needed to cover the set $\mathcal Y_\varepsilon$). Taking expectations in the optimality-gap bound and renaming $\Delta_\varepsilon:=2\tilde\Delta_\varepsilon$ gives
$$
\mathbb E\!\left[\big|\rho(\tilde y^\star_{\mathbf m})-\rho^\star\big|\right]
\;\le\; \frac{\Delta_\varepsilon\,L}{\sqrt{\min_i m_i}}.
$$
If one considers the unrestricted SAA minimizer $y^\star_{\mathbf m}\in\arg\min_{y\in\mathbb R^d}\rho_{\mathbf m}(y)$, the same bound holds upon either restricting the empirical problem to $\mathcal Y_\varepsilon$ (which does not alter $\rho^\star$ and keeps $y^\star$ feasible) or noting that, by uniform convergence on $\mathcal Y_\varepsilon$ and uniqueness \ref{a5}, $y^\star_{\mathbf m}\in\mathcal Y_\varepsilon$ with probability tending to $1$ as $\min_i m_i\to\infty$. 
\end{proof}
The above result extends the convergence analysis to settings where the probability distributions may have unbounded support. 
By exploiting the localization property established in Proposition~\ref{prop:eps-hull}, the analysis is restricted to a compact region $\mathcal Y_\varepsilon$ that concentrates almost all the probability mass. 
This formulation quantifies the rate at which the empirical objective approaches its true expectation under such localized conditions, showing that the approximation error decreases proportionally to $\sqrt{\min_i m_i}$, up to constants depending on the geometry of $\mathcal Y_\varepsilon$. 
Beyond its methodological implications, this result highlights how the spatial concentration of demand distributions affects the stability of the stochastic ordered Weber objective with respect to sampling noise, thus connecting the probabilistic structure of the model with its convex-analytic geometry.






\section{Spherically Symmetric Demands: Error Bounds}\label{sec:sym}

Within the framework of solving \ref{eq:owp}, where demand is continuously distributed over $\R^d$, we examine the suitability of approximating the problem by one defined on a finite set of representative demand points. When demand locations follow spherically symmetric distributions, we derive upper bounds on the approximation error resulting from replacing random demand vectors with the symmetry centers of their respective distributions and solving the deterministic ordered Weber problem on these representative points.

\begin{definition}[Spherical Symmetry]
    Let $X$ be a $d$-dimensional random vector on the probability space $(\Omega,\mu)$,  and $y^* \in \R^d$ be a point. $X$ is said to be \emph{spherically symmetric} centered at $y^*$ if $\mu$ is invariant under rotations centered in $y^*$.
\end{definition}

Note that if $X$ is spherically symmetric
 centered at $y^*\in\mathbb{R}^d$, then $X$ admits the representation
\begin{equation*}
    X =y^* + RU,
\end{equation*}
where $U$ is uniformly distributed on the unit sphere $\mathbb{S}^{d-1}_1(0)$
and $R\geq 0$ is a scalar random variable (the \emph{radius}) independent of $U$.
Hence, by symmetry,
\begin{equation*}
    \E \left[\|y^*-X\|\right]= \E\left[R\right] = \int_0^{\infty} rf_R(r)dr,
\end{equation*}
where $f_R$ is the density of $R$. Note that within this context, the distance should be the one induced by the Euclidean norm $\|z \|:=\sqrt{\langle z,z\rangle}$, $z\in \R^d$.

\begin{lemma}\label{lemma:single-va}
    Let $(\Omega,\mu)$ be a probability space and $y^*\in \R^d$ be a point. If $\mu$ is a spherically symmetric measure centered in $y^*$, then $y^*$ is optimal solution to \ref{eq:owp} for any $X$ drawn according to $\mu$.
\end{lemma}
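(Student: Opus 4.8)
The plan is to reduce the statement to the single-demand case and then exploit the interplay between convexity and rotational symmetry. Since the problem involves a single random vector $X$, the ordered weighted sum collapses to $\rho(y)=\lambda_1\omega_1\,\E[\D(y,X)]$, and because $\lambda_1\omega_1\ge 0$ is a fixed constant, minimizing $\rho$ is equivalent (up to this nonnegative factor, with the degenerate case $\lambda_1\omega_1=0$ making every point optimal) to minimizing $g(y):=\E[\|y-X\|]$, recalling that $\D$ is the Euclidean distance in this section. Thus it suffices to show that $g$ attains its minimum at $y^*$.

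First I would record that $g$ is convex on $\R^d$, being an expectation of the convex maps $y\mapsto\|y-x\|$; this is exactly the convexity already established for $\rho$. Next, using the representation $X=y^*+RU$ with $U$ uniform on the unit sphere and $R\ge0$ independent of $U$, I would translate coordinates and write, for every $z\in\R^d$,
\[
g(y^*+z)=\E\big[\|z-RU\|\big].
\]
The crucial observation is that $U$ and $-U$ are identically distributed, since the uniform law on the sphere is invariant under the antipodal map; hence $RU$ and $-RU$ have the same distribution. Substituting this into the displayed identity yields $g(y^*+z)=\E[\|z+RU\|]=g(y^*-z)$, so $g$ is symmetric under reflection through $y^*$.

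Finally I would combine the two properties: for any $z\in\R^d$, applying convexity to the midpoint of $y^*+z$ and $y^*-z$ gives
\[
g(y^*)\le \tfrac12\,g(y^*+z)+\tfrac12\,g(y^*-z)=g(y^*+z),
\]
where the last equality is the reflection symmetry just derived. Since $z$ is arbitrary, $y^*$ is a global minimizer of $g$, and therefore of $\rho$, which is the claim.

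I expect the only delicate point to be the justification of the reflection symmetry $g(y^*+z)=g(y^*-z)$ directly from the hypothesis of spherical symmetry, together with ensuring that $g$ is well defined so that the midpoint inequality is meaningful; finiteness is guaranteed whenever $\E[R]<\infty$, and otherwise the inequality holds in $[0,\infty]$ and the conclusion is unchanged. The convexity and the concluding averaging step are routine. An alternative route would be a first-order argument establishing $0\in\partial g(y^*)$ via $\E[(y^*-X)/\|y^*-X\|]=-\E[U]=0$, but this requires separate care at a possible atom of $\mu$ at $y^*$, so I prefer the convexity-plus-symmetry argument above.
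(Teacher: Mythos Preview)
Your proof is correct and takes a genuinely different route from the paper's. The paper proceeds by direct analysis: it treats $d=1$ by explicit integral manipulation, and for $d\ge 2$ it writes $y=y^*+ru$, shows the radial derivative $\frac{d}{dr}\E[\|ru+y^*-X\|]$ is strictly positive for $r>0$ by conditioning on $R=\|X-y^*\|$ and carrying out a case-by-case geometric argument with angles on each sphere $\mathbb{S}^{d-1}_R(y^*)$. This yields the strict inequality $\E[\|y-X\|]>\E[\|y^*-X\|]$ for $y\neq y^*$ under mild conditions, but the argument is somewhat laborious.

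Your argument is considerably shorter: you only need two structural facts---convexity of $g$ and the central symmetry $g(y^*+z)=g(y^*-z)$---and the midpoint inequality finishes the job in one line. The symmetry is obtained cleanly from the representation $X=y^*+RU$ (stated in the paper just before the lemma) together with the observation that the uniform law on the sphere is antipodally invariant, so $RU\stackrel{d}{=}-RU$. What you gain is brevity, robustness (no differentiation, no case split on $d$, no issue with a possible atom at $y^*$), and a proof that works verbatim for any norm, not just the Euclidean one. What the paper's approach buys is a \emph{strict} inequality for $y\neq y^*$ (hence uniqueness of the minimizer when $\mu$ is not concentrated at $y^*$), which your midpoint argument does not deliver on its own; however, the lemma as stated only asks for optimality, so this extra strength is not required.
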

\begin{proof}
    It will be sufficient to prove that for every $y\neq y^*$, the inequality $\E\left[\|y-X\|\right]\geq \E\left[\|y^*-X\|\right]$ holds for any $X$ drawn according to $\mu$. First, it begins with the one-dimensional case $d=1$. Let $y\in \R$ be a real, then we have
    \begin{equation*}
        \E\left[|y-X|\right]=\int_{-\infty}^y(y-x)d\mu + \int_{y}^{+\infty}(x-y)d\mu.
    \end{equation*}

    We can assume that $y>y*$. Then, we have
    \begin{equation*}
        \E\left[|y-X|\right] - \E\left[|y^*-X|\right] = \int_{-\infty}^{y^*}(y-y^*)d\mu +\int_{y^*}^{y} (y+y^*-2x) d\mu + \int_{y}^{+\infty} (y^*-y) d\mu.
    \end{equation*}
    Since $\mu$ is invariant under rotations centered in $y^*$, then $\int_{-\infty}^{y^*}(y-y^*)d\mu = \int_{y^*}^{+\infty}(y-y^*)d\mu$. Thus, we get
    \begin{equation*}
        \E\left[|y-X|\right] - \E\left[|y^*-X|\right] = 2\int_{y^*}^{y} (y-x) d\mu \geq 0.
    \end{equation*}
    The same procedure can be made for $y<y^*$.

    Now, we follow with the case $d\geq 2$. Note that we can write any $y\in \R^d$ as $y = ru+y^*$, for a scalar $r\geq 0$ and a unit vector $u$. Since $\mu$ is invariant under rotations centered in $y^*$, to prove the claim it suffices to show that for any fixed unit vector $u\in \SS^{d-1}_1(0)$, it follows that $r = 0$ is the minimizer of $\E\left[\|ru+y^*-X\|\right]$ in $\R_+$. Notice that $\E\left[\|ru+y^*-X\|\right]$ is a continuous function in $r$, since for every $\epsilon>0$ and for every nonnegative $r$ and $r'$ such that $|r'-r|<\epsilon$, we have
    \begin{align*}
        \left|\E\left[\|ru+y^*-X\|\right] - \E\left[\|r'u+y^*-X\|\right]\right|&=\left|\E\left[\|ru+y^*-X\|-\|r'u+y^*-X\|\right]\right|\\
        & \leq \|r'u-ru\|=|r'-r|<\epsilon.
    \end{align*}
    Hence, by the Newton-Leibniz formula, we have

    \begin{equation*}
        \E\left[\|su+y^*-X\|\right]-\E\left[\|y^*-X\|\right]=\int_0^s\frac{d}{dr}\E\left[\|ru+y^*-X\|\right]dr, \; \forall s>0.
    \end{equation*}
    Thus, to see the result, it is sufficient to show that
    \begin{equation*}
        \frac{d}{dr}\E\left[\|ru+y^*-X\|\right]>0, \; \forall r>0.
    \end{equation*}
    
    We know that
    \begin{align}
        \frac{d}{dr}\E\left[\|ru+y^*-X\|\right]&=\frac{d}{dr}\int_{x\in \R^d}\|ru+y^*-x\|d\mu \nonumber\\
        & = \int_{x\in \R^d}\frac{d}{dr}\|ru+y^*-x\|d\mu \nonumber\\
        & = \int_{x\in \R^d}\frac{\langle ru+y^*-x, u\rangle}{\|ru+y-x\|}d\mu \label{eq_exp_diff}.
    \end{align}
    Since $\mu$ is invariant under rotations centered in $y^*$, we know that a random vector $X$ drawn according to $\mu$ with $\|y^*-X\|=R$ is actually drawn according to the uniform distribution on the sphere $\SS^{d-1}_R(y^*)$, i.e., $X\sim \U(\SS_R^{d-1}(y^*))$. We evaluate \eqref{eq_exp_diff} in a fixed $r_0>0$. Let $\mu_{R}$ be the probability measure of the random variable $R=\|y^*-X\|$ and let $Z\sim \U(\SS_R^{d-1}(y^*))$. We have that
    \begin{align}
        \left.\frac{d}{dr}\E\left[\|ru+y^*-X\|\right]\right|_{r_0}& = \int_{x\in \R^d}\frac{\langle r_0u+y^*-x, u\rangle}{\|r_0u+y^*-x\|}d\mu \nonumber \\
        & = \int_0^{+\infty}\frac{\Gamma\left(\frac{d}{2}\right)}{2\pi^{\frac{d}{2}}R^{d-1}}\left(\int_{z\in\SS^{d-1}_R(y^*)}\frac{\langle r_0u+y^*-z, u\rangle}{\|r_0u+y^*-z\|}d\HH^{d-1}\right)d\mu_R,\label{eq:dobleint}
    \end{align}
    where $\HH^{d-1}$ stands for the $(d-1)$-dimensional Hausdorff measure. Now, we study the positivity of the inner integral in \eqref{eq:dobleint} by cases. In the first case, we consider $R\leq r_0$, and obtain
    \begin{equation*}
        \langle r_0u+y^*-z, u\rangle = r_0-\langle z-y^*,u\rangle \geq r_0-R\geq 0,
    \end{equation*}
    where the chain of inequalities holds tight if and only if $z = r_0u+y^*$. So we get that the inner integral in \eqref{eq:dobleint} is strictly positive when $R \in (0, r_0]$. 

    Otherwise, $R>r_0$. We define the random variable $\alpha\in[0, \pi]$ to be the angle between $z-y^*$ and $u$, and we let $\mu_\alpha$ be its associated measure. We also define the random variables $\beta(\alpha), \gamma(\alpha):[0, \pi ]\to [0, \pi ]$ to be the angles between $r_0u+y^* -z$ and $u$, and between $y^*-z$ and $r_0u+y^*-z$, respectively. It is easy to see that the three angles shape a triangle with vertices $y^*, z$, and, $r_0u+y^*$, so $\alpha+\beta(\alpha)+\gamma(\alpha)=\pi$, and therefore $\beta(\alpha)\leq \pi-\alpha$, being tight just when $\alpha=0,\pi$. Finally,

    \begin{align}
        \int_{z\in\SS^{d-1}_R(y^*)}\frac{\langle r_0u+y^*-z, u\rangle}{\|r_0u+y^*-z\|}d\HH^{d-1} & = \int_0^\pi\cos\beta(\alpha)d\mu_\alpha \label{eq:int1}\\
        &= \int_0^{\frac{\pi}{2}}(\cos\beta(\alpha)+\cos\beta(\pi-\alpha) )d\mu_\alpha\label{eq:int2}\\
        & > \int_0^{\frac{\pi}{2}}(\cos(\pi-\alpha)+\cos \alpha )d\mu_\alpha=0,\label{eq:int3}
    \end{align}
where \eqref{eq:int1} follows from scalar product formula, \eqref{eq:int2} follows from the change of variable $\alpha\mapsto \pi-\alpha$ and the symmetry of $\mu_\alpha$ with respect to $\frac{\pi}{2}$, and \eqref{eq:int3} follows from $\beta(\alpha)< \pi-\alpha$ with $\alpha\in (0,\frac{\pi}{2})$. We get that the inner integral in \eqref{eq:dobleint} is also strictly positive when $R>r_0$.
\end{proof}
A possible approximation approach for the stochastic problem is to replace the uncertainty by a deterministic ordered Weber problem with respect to the symmetry under the rotation centers. In the following result, we derive an error upper bound for this approximation.
\begin{theorem}
    Let $X_1, \ldots, X_n$ be spherically symmetric random vectors with centers $y_1^*, \ldots, y_n^*$. Denote by $\rho^\star$ the optimal value of \ref{eq:owp} and by $\rho^*$ the value of \ref{eq:owp} for the optimal solution to the (deterministic) ordered Weber problem with respect to the centers $y^*_i$. Then
    \begin{equation*}
        |\rho^\star - \rho^*| \leq \nu_{\bm{\lambda}}(X_1, \ldots,X_n) := 2 \O_{\bm \lambda}\left(\omega_1\E\left[\|y^*_1-X_1\|\right],\ldots,\omega_n\E\left[\|y^*_n-X_n\|\right]\right).
    \end{equation*}
\end{theorem}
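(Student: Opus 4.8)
The plan is to compare the stochastic objective $\rho$ with the deterministic surrogate $g(y):=\O_{\bm\lambda}\big((\omega_i\|y-y_i^*\|)_i\big)$ obtained by collapsing each demand onto its symmetry center. Write $y^\star\in\arg\min_y\rho(y)$ and $\hat y\in\arg\min_y g(y)$, so that $\rho^\star=\rho(y^\star)$ and $\rho^*=\rho(\hat y)$. Since $y^\star$ is a global minimizer of $\rho$, one immediately has $\rho^\star\le\rho^*$, so the absolute value collapses to $|\rho^\star-\rho^*|=\rho(\hat y)-\rho(y^\star)$, and the whole task reduces to bounding this nonnegative optimality gap. Throughout I work in the convex ordered regime $\lambda_1\ge\cdots\ge\lambda_n\ge0$ that underlies Proposition~2.

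The single estimate driving everything is the triangle inequality inside the expectation: for every $y$ and every $i$, the quantities $\|y-X_i\|$ and $\|y-y_i^*\|$ differ by at most $\|y_i^*-X_i\|$, so taking expectations gives
\[
\bigl|\,\E[\|y-X_i\|]-\|y-y_i^*\|\,\bigr|\le \E[\|y_i^*-X_i\|]=:e_i .
\]
Writing $c_i(y):=\omega_i\E[\|y-X_i\|]$ and $\hat c_i(y):=\omega_i\|y-y_i^*\|$, this yields the two componentwise bounds $c(y)\le \hat c(y)+(\omega_i e_i)_i$ and $\hat c(y)\le c(y)+(\omega_i e_i)_i$ for all $y$.

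The key structural fact is that, for nonnegative non-increasing weights, $\O_{\bm\lambda}$ is both \emph{monotone} in each argument (as already used in Proposition~\ref{prop:eps-hull}) and \emph{sublinear}: by the rearrangement inequality $\O_{\bm\lambda}(\bm d)=\max_{\sigma\in\mathscr{S}_n}\sum_i\lambda_i d_{\sigma(i)}$ is a support function, hence subadditive and positively homogeneous. Combining monotonicity with subadditivity and the componentwise bounds gives $\rho(y)\le g(y)+\O_{\bm\lambda}\big((\omega_i e_i)_i\big)$ and $g(y)\le\rho(y)+\O_{\bm\lambda}\big((\omega_i e_i)_i\big)$ for every $y$. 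Chaining these through the optimality of $\hat y$ for $g$,
\[
\rho(\hat y)\le g(\hat y)+\O_{\bm\lambda}\big((\omega_i e_i)_i\big)\le g(y^\star)+\O_{\bm\lambda}\big((\omega_i e_i)_i\big)\le \rho(y^\star)+2\,\O_{\bm\lambda}\big((\omega_i e_i)_i\big),
\]
which is exactly $\rho^*-\rho^\star\le \nu_{\bm\lambda}(X_1,\dots,X_n)$, and combined with $\rho^\star\le\rho^*$ closes the argument.

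The main obstacle is the middle passage: since the sorting permutation defining $\O_{\bm\lambda}$ generally \emph{differs} between the cost vectors $c(y)$ and $\hat c(y)$, one cannot transfer the perturbation term by term, and subadditivity of the OWA operator is precisely what absorbs this mismatch. Hence the crux is to justify subadditivity cleanly, either via the support-function/rearrangement representation above or by invoking the convexity and positive homogeneity of $\O_{\bm\lambda}$ already recorded in Proposition~2. The role of spherical symmetry and Lemma~\ref{lemma:single-va} is auxiliary rather than load-bearing: they single out $y_i^*$ as the canonical per-demand minimizer and give $e_i=\E[R_i]$, so that $\nu_{\bm\lambda}$ is naturally expressed through the expected radii; the inequality itself rests only on the triangle inequality and the monotone sublinear structure of $\O_{\bm\lambda}$.
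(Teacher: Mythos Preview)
Your proposal is correct and follows essentially the same route as the paper: bound the componentwise deviation via the triangle inequality, lift it through the monotonicity and sublinearity of $\O_{\bm\lambda}$ to obtain $|\rho(y)-g(y)|\le\tfrac{1}{2}\nu_{\bm\lambda}$ for all $y$, and then chain through the optimality of $\hat y$ (the paper invokes Geoffrion's theorem for this last step, which you carry out explicitly). The only noteworthy difference is that the paper invokes Lemma~\ref{lemma:single-va} inside the argument, whereas you correctly observe that spherical symmetry is auxiliary---the two-sided triangle inequality already yields both directions of the pointwise bound without it.
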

\begin{proof}
First of all, notice that based on the triangle inequality, for any $y\in \R^d$
\begin{equation*}
    \|y-X_i\|\leq \|y-y^*_i\|+\|y^*_i-X_i\|, \; \forall i\in [n].
\end{equation*}
Hence,
\begin{equation*}
    \E\left[\|y-X_i\|\right]\leq \|y-y^*_i\|+ \E\left[\|y^*_i-X_i\|\right], \; \forall i\in [n].
\end{equation*}
Since $\O_{\bm \lambda}$ is non-decreasing monotone and sublinear, it follows that
\begin{align*}
    \O_{\bm \lambda}\left(\omega_1\E\left[\|y-X_1\|\right],\ldots,\omega_n\E\left[\|y-X_n\|\right]\right)&\leq \O_{\bm \lambda}\left(\omega_1\|y-y^*_1\|,\ldots,\omega_n\|y-y^*_n\|\right)\\ &+\frac{1}{2}\nu_{\bm{\lambda}}(X_1, \ldots,X_n).
\end{align*}
By Lemma~\ref{lemma:single-va}, we have that
\begin{equation*}
    \frac{1}{2}\nu_{\bm{\lambda}}(X_1, \ldots,X_n)\leq \O_{\bm \lambda}\left(\omega_1\E\left[\|y-X_1\|\right],\ldots,\omega_n\E\left[\|y-X_n\|\right]\right).
\end{equation*}
 Therefore, 
 \begin{align*}
     |\O_{\bm \lambda}\left(\omega_1\E\left[\|y-X_1\|\right],\ldots,\omega_n\E\left[\|y-X_n\|\right]\right)&-\O_{\bm \lambda}\left(\omega_1\|y-y^*_1\|,\ldots,\omega_n\|y-y^*_n\|\right)| \\ &\leq \frac{1}{2}\nu_{\bm{\lambda}}(X_1, \ldots,X_n).
 \end{align*}
 Using \cite[Th. 5]{geoffrion1977objective}, we can conclude that
 \begin{equation*}
     |\rho^\star-\rho^*|\leq \nu_{\bm{\lambda}}(X_1, \ldots,X_n).
 \end{equation*}
\end{proof}
\begin{remark}
    The upper bound provided in the result above depends on the expressions of the expected distances $\E\left[\|y^*_i-X_i\|\right]$ for all $i\in[n]$. Although these expressions can be difficult to derive in general, since they require computing numerically the integrals,
    \begin{equation*}
        \int_{\R^d} \|y^*_i-x\| f_i(x) dx,
    \end{equation*}
    where $f_i$ is the density of $X_i$ for all $i\in[n]$.
    Nevertheless, in some particular cases, these expressions are well-known~\cite[see, e.g.,][]{grimmett2001probability,kotz2000continuous,mardia2000directional}, as shown in the following table, and then, the upper bound can be easily evaluated with them:\\
    
    

    
    
\centering
\renewcommand{\arraystretch}{1.5}
\setlength{\tabcolsep}{6pt}
\begin{adjustbox}{width=0.85\textwidth}
\begin{tabular}{l l c}
\cmidrule{2-3}
 & \textbf{Support / Distribution} &
$\displaystyle \mathbb{E}\left[\|y_i^* - X_i\|\right]$ \\ 
\midrule
\textbf{Uniform on sphere} &
$\mathbb{S}^{d-1}_{R_i}(y_i^*)=\{x\in\R^d:\|y_i^*-x\|=R_i\}$, $R_i\geq 0$ &
$\displaystyle R_i$ \\[2ex]

\textbf{Uniform on ball} &
$\mathbb{B}^d_{R_i}(y_i^*)=\{x\in\R^d:\|y_i^*-x\|\le R_i\}$, $R_i\geq 0$ &
$\displaystyle \frac{d}{d+1}R_i$ \\[2ex]

\textbf{Uniform on spherical shell} &
$\mathbb{B}^d_{R_i,r_i}(y_i^*)=\{x\in\R^d:r_i\le\|y_i^*-x\|\le R_i\}$, $R_i\geq r_i \geq 0$ &
$\displaystyle 
\frac{d}{d+1}\,
\frac{R_i^{d+1}-r_i^{d+1}}{R_i^{d}-r_i^{d}}$ \\[2ex]

\textbf{Gaussian} &
$\mathcal{N}(y_i^*,\,\sigma_i^2 I_d)$ &
$\displaystyle 
\sigma_i\sqrt{2}\,
\frac{\Gamma\!\left(\frac{d+1}{2}\right)}
     {\Gamma\!\left(\frac{d}{2}\right)}$ \\[2ex]

$t$-\textbf{Student} &
$t_{q_i}(y_i^*,\,\sigma_i^2 I_d)$, $q_i>1$ &
$\displaystyle 
\sigma_i\sqrt{q_i}\,
\frac{
\Gamma\!\left(\frac{d+1}{2}\right)
\Gamma\!\left(\frac{q_i-1}{2}\right)
}{
\Gamma\!\left(\frac{d}{2}\right)
\Gamma\!\left(\frac{q_i}{2}\right)
}$ \\
\bottomrule
\end{tabular}
\end{adjustbox}
\end{remark}

With the expressions indicated in the previous remark, a very particular case is the one in which all the demands follow the same distribution (with different parameters).
\begin{corollary}
    Let $X_1, \ldots, X_n$ be spherically symmetric $d$-dimensional random vectors with centers $y_1^*, \ldots, y_n^*$. 
    \begin{enumerate}
    \item If $X_i$ follows a uniform distribution on a sphere $\SS^{d-1}_{R_i}(y_i^*)$ with radius $R_i\geq 0$, for all $i \in [n]$:
        \begin{equation*}
            \nu_{\bm{\lambda}}(X_1, \ldots,X_n) = 2 \sum_{i=1}^n \lambda_i R_{(i)}.
        \end{equation*}
        
        \item If $X_i$ follows a uniform distribution on a ball $\B^d_{R_i}(y_i^*)$ with radius $R_i\geq 0$, for all $i \in [n]$:
        \begin{equation*}
            \nu_{\bm{\lambda}}(X_1, \ldots,X_n) =  \frac{2d}{d+1} \sum_{i=1}^n \lambda_i R_{(i)}.
        \end{equation*}
        
        \item If $X_i$ follows a uniform distribution on a spherical shell $\B^d_{R_i,r_i}(y_i^*)$ with radii $R_i > r_i\geq 0$, for all $i \in [n]$:
        \begin{equation*}
            \nu_{\bm{\lambda}}(X_1, \ldots,X_n) = \frac{2d}{d+1} \sum_{i=1}^n \lambda_i \frac{R_{(i)}^{d+1} - r_{(i)}^{d+1}}{R_{(i)}^{d} - r_{(i)}^{d}}.
        \end{equation*}
        
        \item If $X_i$ follows a Gaussian distribution $\mathcal{N}(y_i^*,\sigma_i^2 I_d)$, for all $i \in [n]$:
        \begin{equation*}
            \nu_{\bm{\lambda}}(X_1, \ldots,X_n) = 2 \sqrt{2} \frac{\Gamma\left(\tfrac{d+1}{2}\right)}
     {\Gamma\left(\frac{d}{2}\right)} \sum_{i=1}^n \lambda_i  \sigma_{(i)}.
        \end{equation*}
        
\item If $X_i$ follows a $t$-Student distribution $t_{q_i}(y_i^*,\sigma_i^2 I_d)$
with $q_i>1$ degrees of freedom and scale $\sigma_i$, for all $i \in [n]$:
\begin{equation*}
    \nu_{\bm{\lambda}}(X_1, \ldots,X_n) = 2
\frac{\Gamma\left(\tfrac{d+1}{2}\right)}{\Gamma\left(\tfrac{d}{2}\right)}
 \sum_{i=1}^n \lambda_i \sigma_{(i)} \sqrt{q_{(i)}} \frac{\Gamma\left(\tfrac{q_{(i)}-1}{2}\right)}{\Gamma\left(\tfrac{q_{(i)}}{2}\right)}.
\end{equation*}

    \end{enumerate}
\end{corollary}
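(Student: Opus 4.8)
The plan is to read each of the five identities as a direct specialization of the preceding theorem, whose bound states that
\[
\nu_{\bm\lambda}(X_1,\ldots,X_n)=2\,\O_{\bm\lambda}\big(\omega_1\E[\|y_1^*-X_1\|],\ldots,\omega_n\E[\|y_n^*-X_n\|]\big).
\]
Thus the entire task reduces to evaluating the scalar radial expectation $\E[\|y_i^*-X_i\|]=\E[R_i]$ for each distributional family and then substituting it into the ordered weighted sum. I would exploit the radial representation $X_i=y_i^*+R_iU_i$ from the spherical-symmetry discussion, which collapses every expected distance to a one-dimensional moment of the radius $R_i$, whose law is read off directly from the prescribed density. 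Substitution into $2\,\O_{\bm\lambda}(\cdot)$ then produces the closed form, with the permutation in the definition of $\O_{\bm\lambda}$ reordering the (weighted) radii in non-increasing order; under the unit-weight normalization of the statement the $\omega_i$ drop out and the ordered positions $(i)$ simply sort the radial parameters.

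For the three uniform cases I would first identify the radial density induced by the surface-area element. On the sphere $\SS^{d-1}_{R_i}(y_i^*)$ the radius is deterministic, so $\E[R_i]=R_i$. On the ball $\B^d_{R_i}(y_i^*)$, respectively the shell $\B^d_{R_i,r_i}(y_i^*)$, the radial density is proportional to $r^{d-1}$ on $[0,R_i]$, respectively $[r_i,R_i]$; a single elementary integration of $r\cdot r^{d-1}$ against the normalizing constant then yields $\tfrac{d}{d+1}R_i$ and $\tfrac{d}{d+1}\frac{R_i^{d+1}-r_i^{d+1}}{R_i^{d}-r_i^{d}}$. For the Gaussian case I would use that $R_i/\sigma_i$ follows a chi distribution with $d$ degrees of freedom, whose first moment is $\sqrt2\,\Gamma(\tfrac{d+1}{2})/\Gamma(\tfrac d2)$, giving $\E[R_i]=\sigma_i\sqrt2\,\Gamma(\tfrac{d+1}{2})/\Gamma(\tfrac d2)$.

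The only genuinely delicate case is the $t$-Student, which I would handle through the scale-mixture representation $X_i=y_i^*+\sigma_i Z/\sqrt{W/q_i}$ with $Z\sim\mathcal N(0,I_d)$ and $W\sim\chi^2_{q_i}$ independent. Then $\|y_i^*-X_i\|=\sigma_i\sqrt{q_i}\,\|Z\|/\sqrt W$, and independence factorizes the expectation into the chi first moment $\E[\|Z\|]=\sqrt2\,\Gamma(\tfrac{d+1}{2})/\Gamma(\tfrac d2)$ and the inverse-root moment $\E[W^{-1/2}]=2^{-1/2}\Gamma(\tfrac{q_i-1}{2})/\Gamma(\tfrac{q_i}{2})$, the latter being finite precisely because $q_i>1$. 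Multiplying these and cancelling the $\sqrt2$ factors produces the stated Gamma ratio $\sigma_i\sqrt{q_i}\,\Gamma(\tfrac{d+1}{2})\Gamma(\tfrac{q_i-1}{2})/\big(\Gamma(\tfrac d2)\Gamma(\tfrac{q_i}{2})\big)$.

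The main obstacle is therefore not structural but bookkeeping: correctly deriving the radial densities and the Gamma-function moment formulas, and tracking the integrability condition $q_i>1$ in the $t$-Student case. Once each $\E[R_i]$ is in hand, the five formulas follow by inserting these values into $2\,\O_{\bm\lambda}(\cdot)$ and unwinding the definition of the ordered weighted sum, so no further analytic work beyond the theorem is required.
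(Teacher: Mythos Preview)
Your proposal is correct and follows exactly the paper's implicit argument: the corollary is a direct specialization of the preceding theorem obtained by inserting the closed-form values of $\E[\|y_i^*-X_i\|]$ for each distributional family into $2\,\O_{\bm\lambda}(\cdot)$. The paper does not give a standalone proof; it simply invokes the table in the preceding Remark (citing standard references for the five expected-radius formulas), whereas you sketch the actual derivations (radial density integration for the uniform cases, the chi first moment for the Gaussian, and the scale-mixture factorization for the $t$-Student), which is more detailed but not a different route.
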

Note that, in the above result, the upper error bounds depend on the ordered weighted sum, $\O_{\bm \lambda}$, applied to functions of the parameters of the distributions. 
In the case where the distributions are supported on balls or spheres, the bounds are influenced by the corresponding radii, 
vanishing in the limit when these radii reduce to zero, i.e., when the distributions collapse to their centers.

\section{Numerical Validation}\label{sec:comp}

This section presents the result of a set of computational experiments designed to evaluate the performance, accuracy, and robustness of the proposed approach for solving the ordered Weber problem with spatially uncertain demands against deterministic and discrete alternative approaches. The experiments cover multiple spatial configurations and probability distributions of the demand, as well as several ordered weighted sum objectives, thereby testing the flexibility of the proposed model under diverse stochastic environments.

\subsection*{Experimental Design}

We follow the procedure proposed by~\cite{kalczynski2025weber} for the so-called \emph{disc Weber problem}, suitably adapted to our setting. Specifically, we synthetically generate instances through a reproducible pseudo-random process that mimics spatially distributed weighted objects, as described in~\cite{drezner2024dispersed}. For each configuration defined by the number of points $n$ (up to $200$) and the space dimension $d \in \{2,3,5\}$, we generate a set of $n$ points $\{y^*_1,\ldots, y^*_n\} \subset \mathbb{R}^d$, together with positive weights $\omega_i$ and radii $R_i$. The weights are scaled to the interval $[1,10]$, and the radii are computed as $
R_i = \sqrt[d]{\omega_i \alpha}$, 
where the parameter $\alpha$ is chosen so that the non-overlapping condition
\[
\alpha \leq \min_{i<j} \left( \frac{\D(y^*_i,y^*_j)}{\omega_i^{1/d}+\omega_j^{1/d}} \right)^d
\]
is satisfied (although this constraint is not required by our approach).

For the smaller datasets ($n\leq 25$), we used the already generated instances provided in \cite{kalczynski2025weber} and publicly available at \url{https://osf.io/wuf2a/}. For the larger instances ($n>25$), we generated five of them with random instances for each configuration of $n$ and $d$ and made them available at our GitHub repository \url{https://github.com/vblancoOR/weber_uncertainty}.

Each point is randomly assigned to a distribution \emph{type} (\texttt{ball}, \texttt{shell}, or \texttt{gaussian}), with the following component-specific parameters:
\begin{itemize}
\item \texttt{ball} components with radii  $R_i$.
    \item \texttt{gaussian} components with  $\sigma_i = R_i/2$.
    \item \texttt{shell} components with inner and outer radii
    $r_i = 4R_i/5$ and $R_i$.
\end{itemize}
For each of these distributions, we generate both spherically symmetric samples (\texttt{sym}), asymmetric ones (\texttt{asym}) obtained by sampling the direction on the unit sphere through a biased law around a random unit vector, and random mixed choices (\texttt{mixed}) among the above (50\% each). 

For simplicity, we consider the Euclidean distance as the base distance metric for all problems. For each instance, we study four different ordered Weber problems:
\begin{itemize}
    \item \texttt{median}: $\boldsymbol{\lambda} = (1, \ldots, 1)$.
    \item \texttt{center}: $\boldsymbol{\lambda} = (1, 0, \ldots, 0)$.
    \item \texttt{halfsum}: $\boldsymbol{\lambda} = (1, \ldots, 1, 0, \ldots, 0)$ with the first $\lceil n/2 \rceil$ entries equal to~1.
    \item \texttt{halfcentdian}: $\boldsymbol{\lambda} = (1, 0.5, \ldots, 0.5)$.
\end{itemize}
To solve these problems, we apply three different approaches:
\begin{itemize}
    \item \texttt{saa}: our sample average approximation method (see Section~\ref{sec:saa}).
    \item \texttt{discrete}: a discretized version of the problem with $10^{5}\times R_i$ points randomly sampled according to the $i$th demand distribution, for all $i\in [n]$.
    \item \texttt{centers}: a simplified deterministic version where each uncertain demand is replaced by its symmetry center.
\end{itemize}
Since \texttt{discrete} provides the most accurate representation of the problem, we evaluate all solutions using its objective function to assess their quality. For each procedure, we record the CPU time (in seconds) required to solve the problem and, for \texttt{saa}, the halfwidth obtained in the bootstrap validation procedure.

The parameters used for \texttt{saa} are indicated in Table~\ref{tab:params}. We set an additional stopping criterion based on an upper bound for the sum of the sizes of the training samples, 
which is denoted by $N_{\max}$. The discretization approach is performed on the same validation sample, $\mathbf{K}^{\rm val}$, of \texttt{saa}.
\begin{table}[h!]
\centering
\renewcommand{\arraystretch}{1}
\begin{tabular}{|c  c  c|}\hline
$m_i^0 = \max\!\left\{5, \left\lceil \dfrac{100 (R_i+\omega_i)}{n} \right\rceil \right\}$ 
& $\gamma = 2$ 
& $|K^{\rm val}_{i}| = 10^4$  \\
 $\varepsilon_1,\, \varepsilon_2 = 10^{-4}$
& $k_{\rm max}=50$
& $N_{\max} = 10^6$\\\hline
\end{tabular}
\caption{Parameters for \texttt{saa}.}
\label{tab:params}
\end{table}
All computational experiments were performed on Huawei FusionServer Pro XH321 (\texttt{albaicin} at Universidad de Granada \url{https://supercomputacion.ugr.es/arquitecturas/albaicin/}) with an Intel Xeon Gold 6258R CPU @ 2.70GHz with 28 cores. Optimization tasks were solved using Gurobi Optimizer version~12.0.1 within a time limit of 2 hours.
\subsection*{Small Instances}
First, we run our approach on the small datasets ($n\in \{5,10,15,20,25\}$) provided in \cite{kalczynski2025weber}. Since the problem analyzed in that paper is a particular case of our problem, we use exactly the same specifications. Specifically, each of the datasets is endowed with $\omega$-weights and with a uniform demand in discs with given radii.

Although the authors in \citet{kalczynski2025weber} claimed that their procedure is not affected by the number of random demand vectors, the number of iterations in our \texttt{saa} approach is indeed influenced by the value of~$n$, even though each subproblem is a conic program and thus solvable in polynomial time. Furthermore, in the discrete formulation, the number of demand points increases to achieve an accurate discretization of the underlying distribution.

In Table \ref{tab:uniform_updated}, we show the explicit results obtained with the approach in the mentioned paper, and the results obtained with our SAA approach. For each size of the demand ($n$) we report, for each of the approaches (including the one for which the results are reported in \cite{kalczynski2025weber}, \texttt{KBD25}), the facility coordinates found by the procedure ($y_1$, $y_2$), the evaluation of the solution in the validation sample (for the \texttt{saa} approach we also include the halfwidth of the bootstrap), the CPU time (in seconds), and the deviation of the objective values with respect to the \texttt{saa} approach.
\begin{table}[h!]
\setlength{\tabcolsep}{3pt}
\renewcommand{\arraystretch}{0.7}
\centering\begin{adjustbox}{width=0.65\textwidth}
\begin{tabular}{@{} c l l c c r l r r @{}}
\toprule
{$n$} & {$\bm \lambda$} & {Approach} & {$y_1$} & {$y_2$} & {$\rho^\star$} & {} & {CPU Time} & {Dev.} \\
\midrule
{5}
 & {\texttt{median}}
   & \texttt{saa}      & 5.8153 & 5.8208 & 97.7722 & $\pm$0.18 & 10.8229 & {} \\
 & & \texttt{discrete} & 5.8065 & 5.8257 & 97.7718 & {}        & 2.5044  & 0.00\% \\
 & & \texttt{centers}  & 4.5241 & 4.7813 & 105.2551& {}        & 0.0005  & 7.11\% \\
 & & \texttt{KBD25}    & 5.8157 & 5.8195 & 97.6395 & {}        & {}      & -0.14\% \\[2pt]
 & {\texttt{halfsum}}
   & \texttt{saa}      & 5.2954 & 5.5000 & 72.6368 & $\pm$0.17 & 6.9786  & {} \\
 & & \texttt{discrete} & 5.2668 & 5.4861 & 72.6346 & {}        & 1.2755  & 0.00\% \\
 & & \texttt{centers}  & 5.5616 & 5.4935 & 72.8779 & {}        & 0.0004  & 0.33\% \\[2pt]
 & {\texttt{halfcentdian}}
   & \texttt{saa}      & 5.7776 & 5.8012 & 61.9910 & $\pm$0.14 & 13.2881 & {} \\
 & & \texttt{discrete} & 5.7674 & 5.8012 & 61.9776 & {}        & 4.1738  & -0.02\% \\
 & & \texttt{centers}  & 5.6514 & 5.7226 & 62.5803 & {}        & 0.0006  & 0.94\% \\[2pt]
 & {\texttt{center}}
   & \texttt{saa}      & 5.7219 & 5.8568 & 26.2020 & $\pm$0.12 & 7.1315  & {} \\
 & & \texttt{discrete} & 5.7242 & 5.8446 & 26.1747 & {}        & 1.7889  & -0.10\% \\
 & & \texttt{centers}  & 5.6188 & 5.7552 & 27.2921 & {}        & 0.0003  & 3.99\% \\
\midrule
{10}
 & {\texttt{median}}
   & \texttt{saa}      & 5.6954 & 5.2372 & 147.1156& $\pm$0.16 & 15.9511 & {} \\
 & & \texttt{discrete} & 5.7024 & 5.2287 & 147.1151& {}        & 4.0537  & 0.00\% \\
 & & \texttt{centers}  & 4.6445 & 4.8212 & 152.2682& {}        & 0.0029  & 3.38\% \\
 & & \texttt{KBD25}    & 5.6971 & 5.2383 & 147.2573& {}        & {}      & 0.10\% \\[2pt]
 & {\texttt{halfsum}}
   & \texttt{saa}      & 5.6184 & 5.4905 & 108.8091& $\pm$0.17 & 22.8464 & {} \\
 & & \texttt{discrete} & 5.6206 & 5.4872 & 108.8091& {}        & 4.5091  & 0.00\% \\
 & & \texttt{centers}  & 5.5975 & 5.3844 & 108.8430& {}        & 0.0011  & 0.03\% \\[2pt]
 & {\texttt{halfcentdian}}
   & \texttt{saa}      & 5.9406 & 5.5942 & 86.9847 & $\pm$0.12 & 15.0059 & {} \\
 & & \texttt{discrete} & 5.9387 & 5.5798 & 86.9417 & {}        & 3.9964  & -0.05\% \\
 & & \texttt{centers}  & 5.8162 & 5.5661 & 87.2149 & {}        & 0.0020  & 0.26\% \\[2pt]
 & {\texttt{center}}
   & \texttt{saa}      & 5.7748 & 5.7470 & 26.0605 & $\pm$0.09 & 10.6161 & {} \\
 & & \texttt{discrete} & 5.7465 & 5.7588 & 26.0122 & {}        & 3.9573  & -0.19\% \\
 & & \texttt{centers}  & 5.7522 & 5.6253 & 26.7979 & {}        & 0.0006  & 2.75\% \\
\midrule
{15}
 & {\texttt{median}}
   & \texttt{saa}      & 5.0002 & 4.8167 & 221.8986& $\pm$0.18 & 13.9660 & {} \\
 & & \texttt{discrete} & 4.9802 & 4.8146 & 221.8965& {}        & 6.9590  & 0.00\% \\
 & & \texttt{centers}  & 4.5241 & 4.7813 & 223.0011& {}        & 0.0084  & 0.49\% \\
 & & \texttt{KBD25}    & 5.0002 & 4.8131 & 221.9010& {}        & {}      & 0.00\% \\[2pt]
 & {\texttt{halfsum}}
   & \texttt{saa}      & 5.7114 & 4.5915 & 156.5684& $\pm$0.18 & 19.2934 & {} \\
 & & \texttt{discrete} & 5.7014 & 4.5910 & 156.5681& {}        & 11.2533 & 0.00\% \\
 & & \texttt{centers}  & 6.0323 & 4.6829 & 157.0488& {}        & 0.0051  & 0.31\% \\[2pt]
 & {\texttt{halfcentdian}}
   & \texttt{saa}      & 5.2801 & 4.6072 & 124.2014& $\pm$0.10 & 15.7489 & {} \\
 & & \texttt{discrete} & 5.2641 & 4.6053 & 124.2007& {}        & 9.2429  & 0.00\% \\
 & & \texttt{centers}  & 4.5701 & 4.7734 & 125.9279& {}        & 0.0112  & 1.37\% \\[2pt]
 & {\texttt{center}}
   & \texttt{saa}      & 6.1935 & 4.4174 & 22.5695 & $\pm$0.06 & 11.0145 & {} \\
 & & \texttt{discrete} & 6.1988 & 4.4182 & 22.5537 & {}        & 5.3429  & -0.07\% \\
 & & \texttt{centers}  & 6.2045 & 4.4137 & 22.5833 & {}        & 0.0008  & 0.06\% \\
\midrule
{20}
 & {\texttt{median}}
   & \texttt{saa}      & 5.2349 & 5.0824 & 254.0675& $\pm$0.18 & 13.3926 & {} \\
 & & \texttt{discrete} & 5.2246 & 5.0827 & 254.0668& {}        & 8.0518  & 0.00\% \\
 & & \texttt{centers}  & 4.5677 & 4.8002 & 257.3738& {}        & 0.0434  & 1.28\% \\
 & & \texttt{KBD25}    & 5.2339 & 5.0808 & 253.9109& {}        & {}      & -0.06\% \\[2pt]
 & {\texttt{halfsum}}
   & \texttt{saa}      & 5.7087 & 5.0758 & 179.4371& $\pm$0.19 & 9.1265  & {} \\
 & & \texttt{discrete} & 5.7073 & 5.0823 & 179.4319& {}        & 7.5804  & 0.00\% \\
 & & \texttt{centers}  & 5.8441 & 4.9356 & 179.6943& {}        & 0.0101  & 0.14\% \\[2pt]
 & {\texttt{halfcentdian}}
   & \texttt{saa}      & 5.4379 & 4.8979 & 140.3928& $\pm$0.10 & 14.8189 & {} \\
 & & \texttt{discrete} & 5.4298 & 4.8986 & 140.3926& {}        & 10.7108 & 0.00\% \\
 & & \texttt{centers}  & 4.9218 & 4.8291 & 141.2690& {}        & 0.0345  & 0.62\% \\[2pt]
 & {\texttt{center}}
   & \texttt{saa}      & 6.1930 & 4.4187 & 22.5670 & $\pm$0.05 & 8.9505  & {} \\
 & & \texttt{discrete} & 6.1927 & 4.4088 & 22.5374 & {}        & 10.7478 & -0.13\% \\
 & & \texttt{centers}  & 6.2045 & 4.4137 & 22.5961 & {}        & 0.0012  & 0.13\% \\
\midrule
{25}
 & {\texttt{median}}
   & \texttt{saa}      & 4.9667 & 5.2125 & 341.3876& $\pm$0.20 & 16.8587 & {} \\
 & & \texttt{discrete} & 4.9590 & 5.2133 & 341.3871& {}        & 11.9093 & 0.00\% \\
 & & \texttt{centers}  & 4.5895 & 4.8496 & 343.6930& {}        & 0.0625  & 0.67\% \\
 & & \texttt{KBD25}    & 4.9665 & 5.2126 & 341.4033& {}        & {}      & 0.00\% \\[2pt]
 & {\texttt{halfsum}}
   & \texttt{saa}      & 5.3986 & 5.2088 & 251.9367& $\pm$0.17 & 12.5287 & {} \\
 & & \texttt{discrete} & 5.3960 & 5.2114 & 251.9367& {}        & 9.7379  & 0.00\% \\
 & & \texttt{centers}  & 5.4494 & 5.2165 & 251.9473& {}        & 0.0346  & 0.00\% \\[2pt]
 & {\texttt{halfcentdian}}
   & \texttt{saa}      & 4.8687 & 5.4990 & 185.8730& $\pm$0.12 & 26.7213 & {} \\
 & & \texttt{discrete} & 4.8618 & 5.4910 & 185.8558& {}        & 24.1238 & -0.01\% \\
 & & \texttt{centers}  & 4.7207 & 5.2788 & 186.2421& {}        & 0.0652  & 0.20\% \\[2pt]
 & {\texttt{center}}
   & \texttt{saa}      & 5.6074 & 5.9912 & 29.0460 & $\pm$0.06 & 7.9245  & {} \\
 & & \texttt{discrete} & 5.6042 & 5.9831 & 29.0269 & {}        & 6.5692  & -0.07\% \\
 & & \texttt{centers}  & 5.6148 & 5.9686 & 29.1600 & {}        & 0.0018  & 0.39\% \\
\bottomrule
\end{tabular}
\end{adjustbox}
\caption{Numerical results for the uniform (discs) datasets.}
\label{tab:uniform_updated}
\end{table}
For all tested instances with uniform distributions on discs, the \texttt{saa} model consistently attains the most accurate objective values $\rho^\star$, confirming its capacity to represent stochastic variability and uncertainty in the demand locations. The associated halfwidths remain small across all problem sizes, reflecting the method’s robustness and statistical reliability. In contrast, the deterministic \texttt{centers} model systematically overestimates the objective (by between 0.3\% and over 7\%), as it neglects randomness in the data. The methods \texttt{discrete} and \texttt{KBD25}  closely approximate the \texttt{saa} results, with deviations typically below 0.1\%, showing that they are adequate surrogates when computational simplicity is a priority. Nonetheless, the \texttt{saa} approach remains the most reliable reference, providing stable estimates of the stochastic optimum and maintaining its accuracy even for the largest tested instances ($n=25$) in this testbed. Although the CPU times required by \texttt{saa} are slightly higher than the others, this overhead is compensated by its flexibility and scalability: \texttt{saa} can be readily extended to larger problem sizes, to different probability distributions of the demand, and to more general ordered weighted sum objectives. Consequently, the \texttt{saa} model offers an optimal trade-off between precision, robustness, and versatility, outperforming all other approaches in terms of solution quality and adaptability.

The complete results of our experiments are available in our GitHub repository \url{https://github.com/vblancoOR/weber_uncertainty}, where we provide the explicit solutions obtained with the different approaches for several ordered objective functions (\texttt{median}, \texttt{center}, \texttt{halfsum}, and \texttt{halfcentdian}), under heterogeneous probability distributions of the demands, as well as the computational performance of the methods. Across all tested distributional settings, the \texttt{saa} formulation consistently attains the most accurate objective values $\rho^\star$, confirming its superior ability to capture stochastic variability and uncertainty in the data. In the asymmetric cases, where directional biases in the demand distribution increase variability, \texttt{saa} continues to outperform the alternatives, providing stable and accurate estimates even under strong spatial heterogeneity. For the mixed distributions, which combine symmetric and asymmetric patterns, the stochastic sampling inherent to \texttt{saa} allows it to adapt effectively to both regular and perturbed spatial structures, maintaining the lowest $\rho^\star$ values and small dispersion across all problem sizes. In the symmetric setting, where all methods naturally converge to similar results due to spatial regularity, \texttt{saa} still achieves the most precise and statistically consistent solutions, serving as a robust reference benchmark. Overall, the \texttt{saa} approach provides an optimal trade-off between precision, robustness, and generality, clearly outperforming alternative formulations in both solution quality and adaptability across all experimental scenarios.
\subsection*{Extended Instances}
We conducted a more complete study, generating new instances with $n \in [50, 200] \cap \mathbb{Z}$ in increments of $25$, following the specifications provided above, to validate our approach. The complete table of results obtained in our experiments is available at our GitHub repository \url{github.com/vblancoOR/weber_uncertainty}. In these experiments, we restrict the computations to distributions \texttt{sym} and \texttt{mixed}, since the later already captures the non-symmetric shape of the demand distributions.

To compare the computational performance of the approaches for solving \ref{eq:owp} across the tested instances, instead of presenting the summary tables that can be obtained directly from the csv files available in our repository,  
we use the \emph{shifted geometric mean} (SGM) as proposed by \citet{DolanMore2002}. 
The SGM mitigates the influence of extreme runtimes and avoids distortions due to near-zero or truncated values. 
Given a set of $q$ runtimes $\{t_1, \ldots, t_q\}$ and a fixed shift constant $s>0$, it is defined as
$$
{\rm SGM}(t_1,\ldots,t_q)
:=\exp\!\left(\frac{1}{q}\sum_{j=1}^q\ln(t_j+s)\right)-s.
$$
This transformation preserves the relative scale of times but reduces the impact of outliers, providing a robust indicator of the \emph{typical} runtime across instances, so it has been recommended as a more robust tool to compare among algorithms for solving a problem~(see \url{https://mattmilten.github.io/mittelmann-plots/}).

In our computations, the shift $s$ was set to $10^{-3}$ to ensure numerical stability and comparability between algorithms, even when some runs terminate very quickly.

Across all instances, the \texttt{saa} algorithm yields a shifted geometric mean of 396.1 seconds, whereas the \texttt{discrete} approach averages 581.8 seconds. 
The geometric mean of shifted ratios (\texttt{saa/discrete}) equals 0.681 with a 95\% confidence interval $[0.656,0.707]$ (that we computed via bootstrap techniques) 
indicating that, on average, \texttt{saa}  requires only 68\% of the runtime of the discrete approach, a multiplicative improvement of approximately 32\%.
This difference is statistically significant, as the confidence interval lies entirely below 1.

We further analyzed performance as a function of instance size, $n$, computing SGM statistics and confidence intervals for each group.
\begin{table}[ht]
\centering
\renewcommand{\arraystretch}{0.9}
\setlength{\tabcolsep}{3pt}
\begin{tabular}{c c c c c c}
\toprule
$n$& \textbf{SGM(\texttt{saa})} & \textbf{SGM(\texttt{discrete})} & \textbf{GM ratio} & \textbf{95\% CI} & \textbf{Speedup (\%)} \\
\midrule
50 & 137 & 193.6 & 0.708 & [0.659, 0.757] & 29.2\\
75 & 209.6 & 338 & 0.620 & [0.575, 0.663] & 38.0\\
100 & 272.8 & 425.3 & 0.641 & [0.595, 0.688] & 35.9\\
125 & 476.6 & 682.9 & 0.698 & [0.642, 0.758] & 30.2\\
150 & 582.5 & 827.2 & 0.704 & [0.624, 0.794] & 29.6\\
175 & 740.6 & 1131 & 0.655 & [0.579, 0.739] & 34.5\\
200 & 1093 & 1637 & 0.667 & [0.596, 0.744] & 33.3\\
\midrule
\textbf{All}& 	\textbf{404.1}& \textbf{603.4} & \textbf{0.670} & \textbf{[0.646, 0.694]} &\textbf{ 33.0}\\
\bottomrule
\end{tabular}
\caption{Shifted geometric mean (SGM) and geometric mean (GM) ratio of runtimes between the \texttt{saa} and \texttt{discrete} for different instance sizes. Ratios below~1 indicate that \texttt{saa} is faster.}
\label{tab:performance-size}
\end{table}
Table \ref{tab:performance-size} shows that \texttt{saa} consistently outperforms the discrete approach across all problem sizes tested. 
The geometric mean ratios range from $0.62$ to $0.71$, confirming that the SAA algorithm achieves between 29\% and 38\% average speedups. 
The advantage is most pronounced for moderate instance sizes ($n=75$ to $100$), where the algorithm achieves its lowest ratios ($0.62$ to $0.64$), indicating that the stochastic approximation benefits from a favorable balance between sample efficiency and optimization complexity. 
For larger instances ($n\ge125$), both methods naturally exhibit higher runtimes, but the relative improvement of \texttt{saa} remains stable at around 30\%. 
Importantly, none of the confidence intervals for the geometric mean ratios includes 1, which statistically confirms that the observed speedups are significant and not due to random variation. 
Overall, these results demonstrate that the adaptive \texttt{saa} framework is not only faster on average but also scales more favorably with problem size, maintaining a consistent computational advantage over the discrete counterpart.

To examine how the computational advantage of \texttt{saa} varies across the different ordered weighted problems defined by $\bm \lambda$, we computed the same performance metrics for each aggregation operator.
\begin{table}[ht]
\centering
\renewcommand{\arraystretch}{0.9}
\setlength{\tabcolsep}{3pt}
\begin{tabular}{l c c c c c}
\toprule
$\bm \lambda$ & \textbf{SGM(\texttt{saa})} & \textbf{SGM(\texttt{discrete})}& \textbf{GM ratio} & \textbf{95\% CI} & \textbf{Speedup (\%)} \\
\midrule
\texttt{median} & 795.3 & 848.5 & 0.937 & [0.895, 0.981] & 6.3\\
\texttt{center} & 88.1 & 205.8 & 0.428 & [0.399, 0.460] & 57.2\\
\texttt{halfcentdian} & 911 & 1065 & 0.856 & [0.796, 0.918] & 14.4\\
\texttt{halfsum} & 417.9 & 713 & 0.586 & [0.565, 0.607] & 41.4\\
\bottomrule
\end{tabular}
\caption{Shifted geometric mean (SGM) and geometric mean (GM) ratio of runtimes between \texttt{saa} and \texttt{discrete} across different ordered problems. }
\label{tab:performance-omp}
\end{table}
Table~\ref{tab:performance-omp} reveals that the computational advantage of \texttt{saa} varies with the aggregation structure of the problem. 
For \texttt{center} and \texttt{halfsum}, our approach  achieves substantial speedups of approximately 57\% and 38\%, respectively, 
indicating that stochastic approximation effectively handles objectives emphasizing extreme or aggregated distances. 
The \texttt{halfcentdian} variant also benefits from a moderate 15\% improvement on average. 
In contrast, the \texttt{median} shows only a marginal 5.6\% gain, with its confidence interval approaching 1, 
suggesting that in this configuration, both methods exhibit comparable computational difficulty. 
Overall, \texttt{saa} demonstrates consistent and significant performance gains across all but the most balanced ordered settings, 
highlighting its robustness and computational scalability when dealing with ordered and probabilistically aggregated distance costs.

Table~\ref{tab:performance-d} compares the computational performance of the proposed \texttt{saa} formulation against the discrete approach across different dimensions~$d$. The SGM and GM ratios consistently indicate that \texttt{saa} achieves substantial runtime improvements. The GM ratios, all below~1, confirm that \texttt{saa} is systematically faster, with average speedups ranging from approximately~25\% in two dimensions to over~40\% in five dimensions. Moreover, the narrow $95\%$ confidence intervals demonstrate the stability of these gains across instances, highlighting the scalability and robustness of the proposed approach as dimensionality increases.
\begin{table}[ht]
\centering
\renewcommand{\arraystretch}{0.9}
\setlength{\tabcolsep}{3pt}
\begin{tabular}{l c c c c c}
\toprule
$d$ & \textbf{SGM(\texttt{saa})}& \textbf{SGM(\texttt{discrete})}& \textbf{GM ratio} & \textbf{95\% CI} & \textbf{Speedup (\%)} \\
\midrule
2 & 330.9 & 443.3 & 0.747 & [0.703, 0.794] & 25.3\\
3 & 391.3 & 573.8 & 0.682 & [0.635, 0.733] & 31.8\\
5 & 509.7 & 863.6 & 0.590 & [0.556, 0.627] & 41.0\\
\bottomrule
\end{tabular}
\caption{Shifted geometric mean (SGM) and geometric mean (GM) ratio of runtimes between \texttt{saa} and \texttt{discrete} for different dimensions.}
\label{tab:performance-d}
\end{table}
Finally, we distinguish between the two stochastic regimes considered for the demand distributions: the \texttt{sym} and \texttt{mixed} cases.
\begin{table}[ht]
\centering
\renewcommand{\arraystretch}{1.1}
\setlength{\tabcolsep}{3pt}
\begin{tabular}{l c c c c c}
\toprule
\textbf{Demand} & \textbf{SGM(\texttt{saa})}& \textbf{SGM(\texttt{discrete})}& \textbf{GM ratio} & \textbf{95\% CI} & \textbf{Speedup (\%)} \\
\midrule
\texttt{sym} & 401.3 & 605.8 & 0.662 & [0.629, 0.697] & 33.8\\
\texttt{mixed} & 406.9 & 601 & 0.677 & [0.642, 0.712] & 32.3\\
\bottomrule
\end{tabular}
\caption{Shifted geometric mean (SGM) and geometric mean (GM) ratio of runtimes between \texttt{saa} and \texttt{discrete} for different demand distributions.}
\label{tab:performance-distribution}
\end{table}
Table~\ref{tab:performance-distribution} indicates that our approach retains its computational advantage across both types of demand distributions. 
For \texttt{sym}, the mean ratio is 0.662, corresponding to an average speedup of about 34\%, 
whereas for \texttt{mixed} distributions the ratio slightly increases to 0.68, corresponding to a 32\% improvement. 
The confidence intervals are narrow and entirely below 1 in both cases, confirming that the advantage of \texttt{saa} is statistically significant. 
The similarity of results suggests that the proposed framework is robust with respect to the underlying stochastic structure of the demands, providing consistent computational savings irrespective of whether the spatial uncertainty is symmetric or heterogeneous.

\section{Conclusions}\label{sec:conc}

This work contributes to the mathematical programming theory of stochastic and spatially uncertain location models by establishing the analytical foundations of the ordered Weber problem under spatial uncertainty. We characterized the structural and convex-analytic properties of the model, proved the convergence and finite-sample stability of an adaptive sample average approximation (SAA) method, and derived explicit analytical error bounds for symmetric distribution families. The proposed framework connects ordered optimization, convex geometry, and stochastic programming, and opens new theoretical questions on the structure and sensitivity of rank-based objectives under random spatial data.

Beyond the classical Euclidean formulation, we developed a general and flexible model that accommodates arbitrary norm-based distance functions, spaces of any finite dimension, and ordered aggregation operators encompassing the standard Weber problem as a particular case. The analysis was carried out for general probability measures representing demand, without requiring bounded support. We studied the theoretical properties of the resulting stochastic optimization problem, in particular the localization of the optimal solution within or near the convex hull of compact regions that concentrate a large portion of the demand probability mass.

An adaptive SAA scheme was proposed to compute approximate solutions. At each iteration, the algorithm solves a discretized conic programming subproblem and dynamically adjusts the sample size according to the local convergence behavior of the demand distributions. We established theoretical convergence guarantees for this method under mild regularity assumptions. In the special case of spherically symmetric demand distributions, we derived analytical results comparing the optimal stochastic solution with its deterministic counterpart, obtained by replacing each distribution with its symmetry center, and provided closed-form expressions for the corresponding approximation errors in specific distribution families.

Finally, the proposed algorithm was validated on benchmark instances from the literature. The numerical results confirm the theoretical predictions, showing that our approach consistently outperforms standard demand discretization schemes and demonstrates high accuracy and computational efficiency for this class of problems.

Future research directions include extending the analysis to the \emph{multisource} setting, where convexity is lost and combinatorial techniques become necessary to obtain comparable theoretical guarantees. Another promising line of work is to adapt the proposed framework to other continuous optimization problems involving spatial uncertainty, such as geometric network or flow models, thereby broadening the scope of the analytical and algorithmic tools developed here.

\paragraph{\bf Funding} 
This research has been financially supported by grants PID2020-114594GB-C21, PID2024-156594NB-C21, and RED2022-134149-T (Thematic Network on Location Science and Related Problems) funded by MICIU/AEI/0.13039/501100011033, FEDER + Junta de Andaluc\'ia project C‐EXP‐139‐UGR23, and the IMAG-Mar\'ia de Maeztu grant CEX2020-001105-MICIU/AEI/10.13039/501100011033.


\end{document}